\newtheorem{theorem}{Theorem}[section]
\newtheorem{prop}[theorem]{Proposition}
\newtheorem{lemma}[theorem]{Lemma}
\newtheorem{remark}[theorem]{Remark}
\newtheorem{question}[theorem]{Question}
\newtheorem{definition}[theorem]{Definition}
\newtheorem{example}[theorem]{Example}
\begin{document}

\title{Geometric structures, Gromov norm and Kodaira dimensions}
\author{Weiyi Zhang}
\address{Mathematics Institute\\  University of Warwick\\ Coventry, CV4 7AL, England}
\email{weiyi.zhang@warwick.ac.uk}

\begin{abstract}
We define the Kodaira dimension for $3$-dimensional manifolds through Thurston's eight geometries, along with a classification in terms of this Kodaira dimension. We show this is compatible with other existing Kodaira dimensions and the partial order defined by non-zero degree maps. For higher dimensions, we explore the relations of geometric structures and mapping orders with various Kodaira dimensions and other invariants. Especially, we show that a closed geometric $4$-manifold has nonvanishing Gromov norm if  and only if it has geometry $\mathbb H^2\times \mathbb H^2$, $\mathbb H^2(\mathbb C)$ or $\mathbb H^4$. 
\end{abstract}

\maketitle

\tableofcontents

\section{Introduction}
Complex Kodaira dimension $\kappa^h(M,J)$ provides a very successful classification
scheme for complex manifolds. This notion is generalized by several authors ({\it c.f.} \cite{L,McS,McS2,LeBmrl, LeBcag}) to symplectic manifolds, especially of dimension two and four. In these two dimensions, this symplectic Kodaira dimension is independent of the choice of symplectic structures \cite{L}. In other words, it is a smooth invariant of the manifold which is thus denoted by $\kappa^s(M)$. In dimension four, the smaller the symplectic Kodaira dimension, the more we know. Symplectic $4$-manifolds with $\kappa^s=-\infty$ are diffeomorphic to rational or ruled surfaces \cite{Liu}. When $\kappa^s=0$, all known examples are K3 surface, Enrique surface and $T^2$ bundles over $T^2$. Moreover, it is shown in \cite{L} that a symplectic manifold with $\kappa^s=0$ has the same homological invariants as one of the manifolds listed above. When $\kappa^s=1$ or $2$, no classification is possible since symplectic manifolds in both categories could admit arbitrary finitely presented group as their fundamental group \cite{Gom}.

 In \cite{DZ}, the authors prove that
complex and symplectic Kodaira dimensions are compatible with each other. More precisely, when a $4$-manifold $M$ admits at the same time both complex and symplectic structures (but the structures are not necessarily compatible with each other),
then $\kappa^s(M)=\kappa^h(M,J)$. In \cite{LZadd}, a general framework of  ``additivity of Kodaira dimension" is provided to further understand the compatibility of various Kodaira dimensions in possibly different dimensions. In particular, it is shown that the Kodaira dimensions are additive for fiber bundles, Lefschetz fibrations and coverings.

Higher dimensional generalizations of Kodaira dimension,\  {\it e.g.} symplectic Kodaira dimension in dimensions six or higher, are less understood except for a proposed definition in \cite{LR}. Like complex Kodaira dimension, it will no longer be a smooth invariant. Hence, the study of this notion in higher dimensions will be associated to the study of deformation classes of symplectic structures and symplectic birational geometry.

As suggested by the additivity framework, dimension three should also attach certain counterpart of Kodaira dimension. In this paper, we give a definition of Kodaira
dimension $\kappa^t(M)$ in dimension three through Thurston's eight $3$-dimensional geometries and the Geometrization Theorem. Certain classification with respect to $\kappa^t(M)$ is given. This notion is then discussed in the framework of ``additivity of Kodaira dimension".  In this sense, it is compatible with the complex Kodaira dimension and symplectic Kodaira dimension in dimension $4$. Remarkably, we show that the $3$-dimensional Kodaira dimension is compatible with the partial order defined by non-zero degree maps.

\begin{theorem}\label{mapKod3intro}
If $f:M^3\longrightarrow N^3$ is a non-zero degree map, then $\kappa^t(M)\ge \kappa^t(N)$.
\end{theorem}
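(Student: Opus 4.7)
The plan is a case analysis on the value of $\kappa^t(N)$. The case $\kappa^t(N)=-\infty$ is vacuous, so three cases remain, and in each I would identify $\kappa^t$ with a topological invariant of the $3$-manifold that is known to be monotone under non-zero degree maps. Two standard inputs do most of the work. First, for a non-zero degree map $f:M\to N$ of closed oriented $3$-manifolds, the image $f_*\pi_1(M)$ has finite index in $\pi_1(N)$ (of index dividing $|\deg f|$); hence virtual properties of the fundamental group such as finiteness, polynomial growth, and virtual nilpotence pass from $\pi_1(M)$ to $\pi_1(N)$. Second, the Gromov simplicial volume obeys $\|M\|\ge |\deg f|\cdot \|N\|$, and is additive under the JSJ/geometric decomposition, so the existence of a hyperbolic piece in $N$ forces one in $M$.

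For $\kappa^t(N)=0$, the pieces of $N$ are modeled on $\mathbb E^3$, Nil or Sol, so $\pi_1(N)$ is infinite and virtually solvable. If one had $\kappa^t(M)=-\infty$, then $\pi_1(M)$ would be virtually cyclic (finite for spherical geometry, virtually $\mathbb Z$ for $S^2\times\mathbb R$); then $f_*\pi_1(M)$ would be a virtually cyclic subgroup of finite index in $\pi_1(N)$, making $\pi_1(N)$ itself virtually cyclic, a contradiction. So $\kappa^t(M)\ge 0$.

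For $\kappa^t(N)=1$, the relevant pieces carry the Seifert geometries $\mathbb H^2\times\mathbb R$ or $\widetilde{SL_2(\mathbb R)}$ (and possibly Sol, depending on where the paper slots it), whose fundamental groups have exponential word growth. If $\kappa^t(M)\le 0$, then $\pi_1(M)$ is virtually nilpotent and therefore of polynomial growth; polynomial growth then transfers to the quotient $f_*\pi_1(M)$ and, through the finite-index inclusion, to $\pi_1(N)$, contradicting the exponential growth forced by the relevant JSJ piece. Hence $\kappa^t(M)\ge 1$. For $\kappa^t(N)=2$, the hyperbolic piece of $N$ makes $\|N\|>0$ by Gromov--Thurston; the degree inequality gives $\|M\|>0$, and additivity of the simplicial volume under JSJ decomposition then produces a hyperbolic piece in $M$, so $\kappa^t(M)=2$.

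The subtle step, which I expect to be the main obstacle, is the bookkeeping for non-geometric $3$-manifolds. Since $\kappa^t$ is defined through the geometric decomposition, each monotone invariant — finiteness of $\pi_1$, polynomial growth, positive Gromov norm — must be verified to correctly read off the maximum $\kappa^t$ over the pieces, and one must check that a JSJ piece realising the target $\kappa^t(N)$ is genuinely detected by the group-theoretic or simplicial-volume input transported back to $M$ by $f$. In particular, the precise placement of Sol into either the $\kappa^t=0$ or $\kappa^t=1$ bin (it is virtually solvable but of exponential growth) determines which invariant one must invoke in the intermediate case, and this is where the argument needs to be reconciled carefully with the paper's definition of $\kappa^t$.
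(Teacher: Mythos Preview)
Your proposal has the right scaffolding --- the finite-index lemma for $\pi_1$ and the Gromov-norm inequality --- but it rests on a misreading of $\kappa^t$ and on group-theoretic invariants that do not survive connected sum.

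First, there is no $\kappa^t=2$ in dimension three: the paper puts $\mathbb H^3$ in category~$1$ alongside $\mathbb H^2\times\mathbb E$ and $\widetilde{SL_2(\mathbb R)}$, and puts Sol in category~$0$. Second, and fatally, the invariants you invoke do not characterize the categories. A closed Sol-manifold has $\kappa^t=0$, yet its fundamental group is not virtually nilpotent and has exponential growth, so ``$\kappa^t(M)\le 0\Rightarrow\pi_1(M)$ virtually nilpotent'' is already false for a single geometric piece. Worse, by Propositions~\ref{infty} and~\ref{0} a manifold with $\kappa^t\le 0$ is in general a connected sum, so $\pi_1(M)$ is only a \emph{free product} of virtually cyclic (resp.\ virtually solvable) groups; for instance $(S^2\times S^1)\#(S^2\times S^1)$ has $\kappa^t=-\infty$ and $\pi_1\cong F_2$, which is neither virtually cyclic nor of polynomial growth. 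Thus growth cannot separate $\kappa^t\le 0$ from $\kappa^t=1$. This is a genuine gap, not bookkeeping: the monotone quantities you propose simply do not compute $\kappa^t$.

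The paper's proof uses the same two global inputs you name (finite index of $f_*\pi_1(M)$ in $\pi_1(N)$, and Gromov norm via Lemma~\ref{hyperbolic} to force $N$ to be a graph manifold), but does the case split on $\kappa^t(M)$ rather than $\kappa^t(N)$ and replaces growth by a criterion that \emph{does} behave well under free products: whether some subgroup contains an infinite-index normal cyclic subgroup (and, in the $\kappa^t(M)=0$ step, whether such a subgroup can fail to be virtually solvable). This is exactly the feature that singles out $\mathbb H^2\times\mathbb E$ and $\widetilde{SL_2(\mathbb R)}$ pieces among graph manifolds, and the paper checks directly that it is inherited by $\pi_1(N)$ by tracking where a putative normal cyclic subgroup can sit inside a free product.
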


This result could also be viewed as the first step towards a relative version of $3$-dimensional Kodaira dimension as what we did for $4$-dimensional symplectic manifolds in \cite{LZadd}.

The {\it Gromov norm} (or {\it simplicial volume}) is a homotopy invariant of oriented closed manifolds which is introduced by Gromov in \cite{GroNorm}. It is defined by minimizing the sum of the absolute values of the coefficients over all singular chains representing the fundamental class.
 A remarkable fact in dimension three is that a closed geometric $3$-manifold has nonzero Gromov norm if and only if it is hyperbolic. 

There are nineteen $4$-dimensional geometries, which were classified by Filipkiewicz \cite{Filip}. As in dimension three, we divide the nineteen geometries into $4$ categories: $-\infty, 0, 1$ and $2$, corresponding to the $4$ possible values of Kodaira dimensions of $4$-manifolds. The following fact identifies closed geometric $4$-manifolds with vanishing Gromov norm.

\begin{theorem}\label{0gromovnorm}
A closed geometric $4$-manifold has nonzero Gromov norm if and only if it has geometry $\mathbb H^4$, $\mathbb H^2\times \mathbb H^2$ or $\mathbb H^2(\mathbb C)$.
\end{theorem}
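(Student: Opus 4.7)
My plan is to handle the two directions separately, proceeding geometry-by-geometry through the Filipkiewicz list and invoking three classical properties of the simplicial volume $\|\cdot\|$: (i) \emph{proportionality} for closed locally symmetric spaces of noncompact type, $\|M\| = c_G \cdot \mathrm{Vol}(M) > 0$ (Gromov--Thurston for real hyperbolic, Lafont--Schmidt in full generality); (ii) \emph{Gromov's amenable vanishing theorem}, $\|M\| = 0$ whenever $\pi_1(M)$ is amenable; and (iii) \emph{Gromov's product inequality} $c\,\|M\|\|N\| \leq \|M \times N\| \leq c'\,\|M\|\|N\|$, together with multiplicativity $\|\widetilde M\| = d\,\|M\|$ under $d$-fold coverings. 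For the nonvanishing direction, closed $4$-manifolds modeled on $\mathbb H^4$, $\mathbb H^2 \times \mathbb H^2$, or $\mathbb H^2(\mathbb C)$ are all closed locally symmetric of noncompact type, so (i) immediately gives $\|M\|>0$. Note that this argument sidesteps the irreducible lattice issue for $\mathbb H^2 \times \mathbb H^2$ (e.g.\ Hilbert modular lattices), where a direct product-multiplicativity argument would fail.

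For the vanishing direction I partition the remaining sixteen geometries into two families. The first family comprises those whose closed models have virtually polycyclic (hence amenable) fundamental group: the spherical, flat, nilpotent, and solvable geometries, together with $S^2 \times \mathbb E^2$ (namely $S^4$, $\mathbb{CP}^2$, $S^2 \times S^2$, $S^2 \times \mathbb E^2$, $\mathbb E^4$, $\mathrm{Nil}^4$, $\mathrm{Nil}^3 \times \mathbb E$, $\mathrm{Sol}^3 \times \mathbb E$, $\mathrm{Sol}^4_0$, $\mathrm{Sol}^4_1$, $\mathrm{Sol}^4_{m,n}$, $F^4$); for each of these (ii) delivers $\|M\|=0$ at once. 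The second family consists of the four product geometries with non-amenable $\pi_1$: $S^2 \times \mathbb H^2$, $\mathbb H^3 \times \mathbb E$, $\widetilde{SL_2} \times \mathbb E$, and $\mathbb H^2 \times \mathbb E^2$. For each of these I would show that every closed model admits a finite cover diffeomorphic to a product $F \times N$ with $F \in \{S^1, S^2, T^2\}$, using compactness of $O(3)$ (for the $S^2$ case) and the Bieberbach structure of crystallographic subgroups of $\mathrm{Isom}(\mathbb E^k)$ (for the Euclidean-factor cases) to split a finite-index subgroup of the lattice. Since each such $F$ has vanishing simplicial volume, (iii) gives $\|F \times N\|=0$, and multiplicativity under finite covers yields $\|M\|=0$.

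The main technical step is the finite-cover splitting in the second family. This is a classical consequence of the structure of lattices in product isometry groups with a compact or flat factor, and is worked out carefully in Hillman's monograph \emph{Four-Manifolds, Geometries and Knots}; I would cite it rather than reproduce the group-theoretic bookkeeping. A more self-contained alternative would bypass splitting altogether by noting that each of the four manifolds in the second family carries a natural positive-dimensional $F$-structure arising from its spherical or Euclidean factor, and then invoking Yano-type vanishing of simplicial volume in the presence of such a structure.
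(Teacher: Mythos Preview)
Your overall strategy is sound and the nonvanishing direction matches the paper exactly (Lafont--Schmidt for locally symmetric spaces of noncompact type). The vanishing direction, however, is organized differently from the paper and has two small bookkeeping slips.

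First the slips. Your enumeration of the sixteen remaining geometries omits $S^3\times\mathbb E$ (it belongs in your first family: closed models have virtually cyclic $\pi_1$), and you have double-counted by listing $\mathrm{Sol}^3\times\mathbb E$ separately from $\mathrm{Sol}^4_{m,n}$, of which it is the case $m=n$. Also, $F^4$ should not be placed in the amenable-$\pi_1$ family on the grounds of being ``virtually polycyclic'': the correct observation is simply that $F^4$ admits no closed model at all, so there is nothing to prove. None of these affects the argument once corrected.

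On the substantive difference: for the four product geometries with non-amenable $\pi_1$ (your second family), the paper does \emph{not} pass to a finite product cover and invoke the product inequality. Instead it carries out, case by case, a direct analysis of discrete cocompact subgroups of $\mathrm{Isom}(\mathbb H^3\times\mathbb E)$, $\mathrm{Isom}(\mathbb H^2\times\mathbb E^2)$, and $\mathrm{Isom}(\widetilde{SL_2}\times\mathbb E)$, showing that any \emph{closed} quotient must carry a nontrivial $S^1$-action coming from the Euclidean factor (the indiscrete-image cases force the group to be virtually abelian of small rank, hence noncompact quotient). Vanishing then follows from Yano's theorem that $S^1$-actions kill simplicial volume. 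For $S^2\times\mathbb H^2$ (and the other sphere-factor geometries) the paper likewise uses the inherited spherical foliation to produce an $S^1$-action. Your route via Hillman's splitting theorems plus the product inequality is a legitimate alternative and is arguably cleaner to state, but it outsources exactly the lattice-structure work that the paper does by hand; your ``self-contained alternative'' via $\mathcal F$-structures is in fact closer to what the paper actually does, and is also the approach of Su\'arez--Serrato cited there.
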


The non-vanishing part of the above theorem is due to \cite{LS, BK, GroNorm}. After posting this paper, the author is kindly informed by Pablo Su\'arez-Serrato that the Theorem \ref{0gromovnorm} has been obtained by him in \cite{S-S}. Actually, \cite{S-S} also establishes other equivalence conditions. Especially, it shows that a closed geometric $4$-manifold not modeled on $\mathbb H^4$, $\mathbb H^2\times \mathbb H^2$ or $\mathbb H^2(\mathbb C)$ if and only if it admits an $\mathcal F$-structure in the sense of Cheeger-Gromov \cite{ChGr}, which implies Gromov norm zero by \cite{ChGr, PPinv}. On the other hand, our argument is shorter and does not involve $\mathcal F$-structures. Instead, we use the $S^1$-action vanishing result of \cite{Yano} and amenable fundamental group vanishing result of \cite{GroNorm}.

For  dimensions greater than or equal to four, we also explore the mapping orders defined by maps regarding to various structures, {\it e.g.}  complex, symplectic or $J$-holomorphic. In the spirit of Theorem \ref{mapKod3intro}, we then discuss the relations of these mapping orders with  Kodaira dimensions and other invariants associated to different structures,\  {\it e.g.} the Gromov norm, topological entropy, $J$-(anti)-invariant cohomology {\it etc.}. Several structural properties of non-zero degree maps and degree one maps are discussed along this line. Various questions are raised during the discussions.

This paper is expanded from a chapter of the author's thesis \cite{Z}. A previous version was titled {\it Geometric Structures and Kodaira Dimensions}. We thank Tian-Jun Li for his great interest and constant encouragement for this work from a very early stage. The author is grateful to Claude LeBrun, Yi Liu, Pablo Su\'arez-Serrato, Rafael Torres and Yunhui Wu for their very useful comments. 

Without otherwise mentioning, our objects are connected closed oriented manifolds.

\section{Kodaira dimension of $3$-manifolds}
\subsection{Eight geometries and the definition}
We start with the discussion of the geometrization theorem, which says that every closed $3$-manifold can be decomposed uniquely into pieces that each has one of Thurston's eight geometric structures.  

The first step of the decomposition is the prime
decomposition, the existence and uniqueness is due to Kneser and Milnor respectively.

{\theorem \label{thurston} Every compact, orientable $3$-manifold
can be decomposed into the connected sum of a unique (finite)
collection of prime $3$-manifolds. }

This reduces the study further decomposition to
prime manifolds. The geometrization theorem was
conjectured by Thurston and finally proved by Perelman using Hamilton's Ricci flow \cite{P1, P2, P3} (see also \cite{CZ, KL, MT}).

{\theorem \label{geometrization}Every oriented prime closed $3$-manifold can be cut
along tori, so that the interior of each of the resulting
manifolds has a geometric structure with finite volume. }

Notice that there is a unique minimal way of cutting an irreducible oriented 3-manifold along tori into pieces that are Seifert manifolds or atoroidal called the JSJ decomposition. It is not quite the same as the decomposition in the above theorem, because some of the pieces in the JSJ decomposition might not have finite volume geometric structures. Moreover, there are many inequivalent cuttings of Theorem \ref{geometrization} depending on the initial metric to start the Ricci flow.

Let us digress on the definition of a geometry structure mentioned in Theorem \ref{geometrization}.
A {\it model geometry} is a simply connected smooth manifold $X$ together with a transitive action of a Lie group $G$ on $X$ with compact stabilizers.
A model geometry is called {\it maximal} if $G$ is maximal among groups acting smoothly and transitively on $X$ with compact stabilizers. Sometimes this condition is included in the definition of a model geometry.
A {\it geometric structure} on a manifold $M$ is a diffeomorphism from $M$ to $X/\Gamma$ for some model geometry $X$, where $\Gamma$ is a discrete subgroup of $G$ acting freely on $X$. If a given manifold admits a geometric structure, then it admits one whose model is maximal. In other words, different geometries are distinguished from their fundamental groups. In this paper, a {\it geometry} means a maximal model geometry such that at least one model $X/\Gamma$ has finite volume. 

There is a unique geometry  in dimension one,  that of the Euclidean line $\mathbb E^1$ (or sometimes denoted by $\mathbb R$). There are three geometries in dimension $2$: the spherical geometry $S^2$, the Euclidean geometry $\mathbb E^2$ and the hyperbolic geometry $\mathbb H^2$. 

In dimension three we have the following eight maximal
geometric structures:

\begin{enumerate}
    \item Spherical geometry $S^3$;
    \item The geometry of $S^2 \times \mathbb{E}$;
    \item Euclidean geometry $\mathbb{E}^3$;
    \item Nil geometry $Nil$;
    \item Sol geometry $Sol$;
    \item The geometry of $\mathbb{H}^2\times \mathbb{E}$;
    \item The geometry  $\widetilde{SL_2(\mathbb{R})}$; %of the universal cover of $PSL_2(\mathbb{R})$;
    \item Hyperbolic geometry $\mathbb{H}^3$.
\end{enumerate}

Here $\widetilde{SL_2(\mathbb{R})}$ is the universal cover of $PSL_2(\mathbb{R})$, the unit tangent bundle of $\mathbb H^2$. The spherical geometry $S^3$ could also be viewed as the double cover of the unit tangent bundle of $S^2$. The Nilpotent group is the group of $3\times 3$ upper triangular matrices of the form 
 \[B=\begin{pmatrix}
1 & b & c \\
0 & 1 & a \\
0 & 0 & 1 \end{pmatrix}\]
  The solvable group $Sol=\mathbb R^2\rtimes_{\phi} \mathbb R$, where $\phi(t)(x, y)=(e^tx, e^{-t}y)$.

To define the Kodaira dimension of $M^3$, we divide
the eight Thurston geometries into three categories:

$$\begin{array}{rl}
    -\infty: &\hbox{$S^3$ and $S^2 \times \mathbb{E}$}; \cr
     0: &\hbox{$\mathbb{E}^3$, $Nil$ and $Sol$};\cr
     1: &\hbox{$\mathbb{H}^2\times \mathbb{E}$, $\widetilde{SL_2(\mathbb{R})}$, $\mathbb H^3$}.
\end{array}$$

Given a $3$-manifold $M^3$,  we first decompose it into prime pieces
and then further exploit a  toroidal decomposition
for each prime summand, such that at the end each piece admits one of the eight geometric structures with finite volume. By Theorem \ref{thurston}, the decomposition is unique. We call this a $T$-decomposition. For example, 
$\mathbb RP^3\# \mathbb RP^3$ admits a geometric structure of type
 $S^2 \times \mathbb{R}$. But in this
paper, we should first decompose it into two $\mathbb RP^3$, then these
two prime pieces admit spherical geometry.

We are ready to give the following definition of Kodaira dimension of $3$-manifolds:

{\definition \label{t} For an oriented $3$-dimensional manifold $M^3$, we
define the Kodaira dimension $\kappa^t(M^3)$ as follows:

\begin{enumerate}
    \item $\kappa^t(M^3)=-\infty$ if for any $T$-decomposition, each piece has geometric type in category $-\infty$;
    \item $\kappa^t(M^3)=0$ if for any $T$-decomposition, we have at least a piece with geometry type in category $0$, but no piece has type in category $1$;
    \item $\kappa^t(M^3)=1$ if for any $T$-decomposition, we have at least one piece in category $1$.
\end{enumerate}
}

For non-orientable $M$, we define $\kappa^t(M)$ to be that of its oriented double cover $\widetilde M$.

It is worth noting that $\kappa^t$ is not automatically well-defined since the decomposition in Theorem \ref{geometrization}. In fact, depending on the choice of the initial metric, the Ricci flow will cut up a manifold into geometric pieces in many inequivalent ways. 

Before showing the Kodaira dimension $\kappa^t(M^3)$ is well-defined, let us recall an important result of Thurston (see Theorem 4.7.10 of \cite{Thur}), which is used several times throughout the paper.

\begin{theorem}[Thurston]\label{finiteclosed}
Non-closed $3$-dimensional geometric manifolds with finite volume exist only for geometries in category $1$, {\it i.e.} $\mathbb H^3$, $\mathbb H^2\times \mathbb E$ and $\widetilde{SL_2(\mathbb{R})}$.
\end{theorem}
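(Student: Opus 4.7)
The plan is to establish both directions of the equivalence. For the forward direction --- finite-volume implies compact for the five geometries in categories $-\infty$ and $0$ --- I would argue case by case through $S^3$, $S^2\times\mathbb E$, $\mathbb E^3$, $Nil$, and $Sol$. For the backward direction I would exhibit explicit non-compact finite-volume examples for each of $\mathbb H^3$, $\mathbb H^2\times \mathbb E$ and $\widetilde{SL_2(\mathbb R)}$.

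The easy cases come first. $S^3$ is itself compact, so every quotient $S^3/\Gamma$ is compact and finite volume further forces $\Gamma$ finite. For $S^2\times \mathbb E$, the identity component of the isometry group splits as $\mathrm{Isom}(S^2)\times \mathrm{Isom}(\mathbb E)$; since $S^2$ is already compact, a finite-volume quotient $\Gamma\backslash(S^2\times\mathbb E)$ forces the image of $\Gamma$ in $\mathrm{Isom}(\mathbb E)$ to be a finite-covolume discrete subgroup, and any such is virtually $\mathbb Z$, yielding a compact quotient on the $\mathbb E$-factor and hence a compact total quotient. For $\mathbb E^3$, Bieberbach's theorem shows that any discrete finite-covolume subgroup of $\mathrm{Isom}(\mathbb E^3)$ is virtually a lattice of translations, so the quotient is finitely covered by $T^3$ and is in particular compact.

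The main obstacle is $Nil$ and $Sol$. Both are simply connected solvable Lie groups, with $Nil$ nilpotent. The key input is the classical theorem (due to Mal'cev in the nilpotent case, Mostow in general) that every lattice in a simply connected solvable Lie group is uniform. Applied to $Nil$ and $Sol$, this immediately gives that finite-covolume quotients are compact. A more self-contained argument would have to replace this by a direct geometric analysis showing that the exponentially-twisted structure of $Sol$, and the Heisenberg-type scaling of $Nil$, are incompatible with cusp-like ends of finite volume; this is the substantive Lie-theoretic content of the theorem and is where I expect the real work to lie.

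For the backward direction I would invoke standard examples: the figure-eight knot complement as a finite-volume cusped hyperbolic $3$-manifold for $\mathbb H^3$; the product $\Sigma\times S^1$ of a finite-area non-compact hyperbolic surface $\Sigma$ with a circle for $\mathbb H^2\times\mathbb E$; and the unit tangent bundle of such a $\Sigma$ (or, more generally, an $\widetilde{SL_2(\mathbb R)}$ Seifert fibration over the corresponding hyperbolic $2$-orbifold of finite area with cusps) for $\widetilde{SL_2(\mathbb R)}$. Each is visibly non-compact and visibly of finite volume, completing the proof.
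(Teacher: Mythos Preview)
Your proposal is a reasonable and essentially correct sketch of how this result is proved, but you should be aware that the paper does not prove this theorem at all: it is stated as a result of Thurston and cited to Theorem~4.7.10 of \cite{Thur}, with no argument given. So there is no ``paper's own proof'' to compare against; the paper simply invokes the statement as a known fact.

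On the substance of your sketch: the compact and Euclidean cases are handled correctly, and invoking Mal'cev--Mostow for $Nil$ and $Sol$ is the right idea. One point you gloss over is that the discrete group $\Gamma$ acts through the full isometry group of the geometry, not just through left translations in the Lie group; for $Nil$ and $Sol$ the isometry group contains the Lie group as a cocompact normal subgroup, so a finite-covolume $\Gamma$ intersects it in a lattice up to finite index, and only then does the uniform-lattice theorem apply. Similarly, in the $S^2\times\mathbb E$ case the image of $\Gamma$ in $\mathrm{Isom}(\mathbb E)$ need not be discrete a priori; the cleaner argument is that the kernel of this projection is a finite subgroup of $\mathrm{Isom}(S^2)$, so the image is discrete and then has finite covolume in $\mathbb E$. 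These are routine fixes. Your backward-direction examples are the standard ones and are fine.
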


We are ready to show that the Definition \ref{t} is well-defined.

{\theorem Definition \ref{t} is well-defined.}
\begin{proof}
For a manifold $M^3$, if we have a decomposition with every piece
 from category $-\infty$, then by Theorem \ref{finiteclosed}, there are no toroidal
decompositions. Because the prime decomposition is unique, then we
know that the case $\kappa^t=-\infty$ is well-defined.

Similarly, if we have a decomposition with at least one piece from
category $0$, but no piece from category $1$, then by Theorem \ref{finiteclosed},
there are no toroidal decompositions. Because the prime decomposition
is unique, then we know that the case $\kappa^t=0$ is well-defined.

Finally, the case $\kappa^t=1$  is just the complementary of cases
$\kappa^t=0$ and $\kappa^t=-\infty$. 

Hence, Definition \ref{t} is well-defined.
\end{proof}

Furthermore, we can classify the manifolds with $\kappa^t=-\infty$
or $0$. 

{\prop \label{infty}
Let $M^3$ be a $3$-dimensional manifold with $\kappa^t(M^3)=-\infty$. Then $M=(M_1\#
M_2\# \cdots \# M_n)$, where each $M_i$ is
prime and of the following types:
\begin{enumerate}
    \item spherical, i.e. it has a Riemannian metric of constant positive sectional
    curvature;
    \item $S^2\times S^1$;
    \item nontrivial $S^2$ bundle over $S^1$;
    %\item $P^2\times S^1$.
\end{enumerate}
}

\begin{proof}
We first decompose it into prime manifolds. Then we decompose
those prime ones to pieces of finite volume with only geometries
in category $-\infty$. If some geometric piece with finite volume has
spherical geometry $S^3$, then by elliptization conjecture which
is a corollary of Theorem \ref{geometrization}, it has a Riemannian
metric of constant positive sectional curvature. If some geometric piece with finite volume has the
geometry of $S^2 \times \mathbb{R}$, then it is one of the remaining
types listed in the statement of this theorem. In particular, these geometric pieces are all compact without boundary. It shows that
there are no further toroidal decompositions after prime decomposition. This
finishes the proof.
\end{proof}

For Kodaira dimension $0$ case, we have the following similar
result.

{\prop \label{0}Let $M^3$ be a $3$-dimensional manifold with $\kappa^t(M^3)=0$. Then $M=(M_1\# M_2\#
\cdots \# M_n)$, where each $M_i$ is prime and of
the following types:
\begin{enumerate}
    \item spherical, i.e. it has a Riemannian metric of constant positive sectional
    curvature;
    \item $S^2\times S^1$;
    \item nontrivial $S^2$ bundle over $S^1$;
    %\item $P^2\times S^1$;
    \item Seifert fibrations with zero orbifold Euler characteristic; %$T^2$ bundle over $S^1$: monodromy is finite order (Euclidean), Dehn twist (Nil) or Anosov (Sol);
    \item the mapping torus of an Anosov map of the 2-torus or quotient of these by groups of order at most 8.
\end{enumerate}

Moreover, at least one $M_i$ is of type $(4)$ or $(5)$.
}

\begin{proof}
The proof is similar to that of Proposition \ref{infty}.  First, the finite volume geometric pieces with structures
in category $0$ are also compact without boundary. Hence we still only
have prime decompositions. Second,  we have three more
geometries: $\mathbb E^3$, Nil, Sol. Euclidean and Nil are Seifert fiber spaces with orbifold Euler number $0$. 
Compact manifolds with Sol geometry are either  $T^2$
bundles over $S^1$ with monodromy of Anosov type or quotient of these by groups of order at most 8. These correspond to the last two types listed in the statement.
\end{proof}

\begin{remark}
There are two more non-orientable prime $3$-manifolds with the geometry in category $-\infty$ and $0$: $\mathbb RP^2\times S^1$ and the mapping torus of the antipode map of $S^2$, which is the non-orientable fiber bundle of $S^2$ over $S^1$. 
\end{remark}

\subsection{Nonzero degree maps}  In this section, we would like to discuss that how the Kodaira dimensions of $3$-manifolds change under non-zero degree maps.

When $f$ is a degree $k>2$ map between $M$ and $N$, it
can be deformed to a branched covering whose branch locus is a link. 
Hence the possible definition of
relative Kodaira dimension has its own interests for study, whence the branched locus is non-empty.

We start with showing $\mathbb H^3$ is the ``largest" geometry among the eight. It follows from basic properties of the  {\it Gromov norm}, or sometimes called the
{\it simplicial volume}. It is a norm on the homology (with real coefficients) given by minimizing the sum of the absolute values of the coefficients over all singular chains representing a cycle. The Gromov norm $||M||$ of the manifold $M$ is the Gromov norm of the fundamental class.  More precisely, let $|\cdot |_1: C_k(M; \mathbb R)\rightarrow \mathbb R$ be the $l^1$ norm on real singular chains: for $z=\sum c_i\sigma_i\in C_k(M; \mathbb R)$, $$|z|_1:=\sum |c_i|.$$ Then the Gromov norm is $$||M||:=\inf\{|z|_1|[z]=[M]\}\in \mathbb R_{\ge 0}.$$

We notice that the Gromov norm is additive when  gluing along tori. This fact implies that the
Gromov norm of a $3$-manifold is proportional to the sum of the
volume of the hyperbolic pieces under a geometric decomposition. In
particular, a $3$-manifold has zero Gromov norm if and only if this
is a graph manifold. 

\begin{lemma} \label{hyperbolic} Suppose $f:M^3\longrightarrow N^3$ is a non-zero degree map.
If $\kappa^t(N)=1$ and at least one of the geometric pieces has
geometry $\mathbb H^3$, then $\kappa^t(M)=1$. Moreover, at least one
of the geometric pieces for $M$ is hyperbolic.
\end{lemma}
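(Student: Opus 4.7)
The plan is to prove this via the Gromov norm, exploiting the two properties highlighted just before the lemma: (i) the Gromov norm of a $3$-manifold is additive under gluing along tori, so it is proportional to the total hyperbolic volume of its geometric pieces; in particular, a closed $3$-manifold has $\|M\|=0$ iff its $T$-decomposition contains no hyperbolic pieces (i.e.\ $M$ is a graph manifold). We also use the standard functoriality of the Gromov norm under continuous maps: for a map $f:M^3\to N^3$ of degree $d\neq 0$, one has $\|M\|\ge |d|\cdot \|N\|$.

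First I would observe that since $\kappa^t(N)=1$ with at least one hyperbolic piece in the $T$-decomposition of $N$, property (i) gives $\|N\|>0$. Applying the degree inequality, we get $\|M\|\ge |d|\cdot\|N\|>0$, so in particular $M$ cannot be a graph manifold. By the characterization recalled above, the $T$-decomposition of $M$ must contain at least one piece modeled on $\mathbb H^3$.

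Finally, since $M$ has at least one geometric piece in category $1$ (namely a hyperbolic one), Definition \ref{t} immediately gives $\kappa^t(M)=1$. This simultaneously proves the two assertions of the lemma: the value of the Kodaira dimension and the existence of a hyperbolic piece in $M$.

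The argument is essentially immediate once the correct tools are assembled; the only subtlety is justifying that $\|N\|>0$ follows merely from the existence of one hyperbolic piece, which in turn relies on Gromov's theorem that each hyperbolic piece contributes its volume (up to a positive dimensional constant) to $\|N\|$ and that toroidal gluing is additive. Assuming this fact (stated in the excerpt), no further obstacle arises.
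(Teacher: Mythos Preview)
Your proof is correct and follows essentially the same approach as the paper: both use that a hyperbolic piece forces $\|N\|>0$, then apply the degree inequality $\|M\|\ge |\deg(f)|\cdot\|N\|>0$, and conclude that $M$ must have a hyperbolic piece (hence $\kappa^t(M)=1$). If anything, your write-up is more careful in spelling out why $\|N\|>0$ and why $\|M\|>0$ forces a hyperbolic piece, whereas the paper's proof is just three lines.
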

\begin{proof}

Since $N$ has a hyperbolic piece, $||N||>0$. Then by the
definition of Gromov norm, $||M||\ge \deg(f)\cdot ||N||>0$. Indeed,
$\kappa^t(M)=1$ and $M$ also has a hyperbolic piece.
\end{proof}

By a result of Rong \cite{Rong}, we also know that a non-zero degree map between
Seifert manifolds with infinite $\pi_1$ is homotopic to a fiber
preserving pinch followed by a fiber preserving branched covering.
In this case, we can reduce our situation to that of dimension
$2$, and get $\kappa^t(M)\ge
\kappa^t(N)$ in turn.

The next theorem is the first step toward a definition of the relative Kodaira dimension of $3$-manifolds. We will need the following lemma (see for example Lemma 1.2 in \cite{Rong}).
\begin{lemma}\label{rolfsen}
If there is a non-zero degree map $f: M \rightarrow N$, then $f_*\pi_1(M)$ has finite index in $\pi_1(N)$.
\end{lemma}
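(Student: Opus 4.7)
The plan is to argue by contradiction via covering space theory and the top-dimensional homology of non-compact manifolds. Write $n=\dim N=\dim M$, set $H=f_*\pi_1(M)\subseteq \pi_1(N)$, and suppose for contradiction that $[\pi_1(N):H]=\infty$.

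First I would form the connected covering $p:\widetilde N\to N$ corresponding to the subgroup $H$. Since $f_*\pi_1(M)\subseteq H=p_*\pi_1(\widetilde N)$, the lifting criterion for covering spaces produces a continuous map $\widetilde f:M\to \widetilde N$ satisfying $p\circ \widetilde f=f$. Next, I would use that $\widetilde N$ is an $n$-manifold which is a non-trivial covering of a closed $n$-manifold with deck group of infinite order, hence $\widetilde N$ is non-compact. Since $N$ is oriented, so is $\widetilde N$ (the orientation pulls back along $p$), and the top integral homology of a connected, non-compact, oriented $n$-manifold without boundary vanishes: $H_n(\widetilde N;\mathbb Z)=0$.

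From here I would push the fundamental class around: $f_*[M]=p_*\widetilde f_*[M]$, but $\widetilde f_*[M]\in H_n(\widetilde N;\mathbb Z)=0$, so $f_*[M]=0$ in $H_n(N;\mathbb Z)$. On the other hand $f_*[M]=\deg(f)\cdot [N]$, and $[N]$ is a generator of $H_n(N;\mathbb Z)\cong \mathbb Z$ since $N$ is closed, connected, and oriented. Thus $\deg(f)=0$, contradicting the hypothesis. Therefore $[\pi_1(N):H]<\infty$.

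The only real subtlety is verifying that the cover $\widetilde N$ is indeed non-compact when $H$ has infinite index, and that top homology vanishes for it; both follow from the fact that the deck transformation action produces infinitely many disjoint translates of any compact fundamental domain lift, and from standard Poincar\'e duality/orientation considerations for open manifolds. I do not expect any serious obstacle: the argument is entirely standard once the lifting criterion is invoked, and the orientability hypothesis on $M$ and $N$ (assumed throughout the paper) is used precisely to make sense of $\deg(f)$ and of the fundamental classes being non-zero.
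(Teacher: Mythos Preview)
Your argument is correct and is precisely the standard covering-space proof of this well-known lemma. Note that the paper itself does not supply a proof: it merely states the lemma and refers the reader to Lemma~1.2 of \cite{Rong}, so there is no ``paper's own proof'' to compare against beyond that citation. The argument you give---lift $f$ through the cover of $N$ corresponding to $H=f_*\pi_1(M)$, observe that an infinite-index cover of a closed manifold is non-compact and hence has vanishing top homology, and conclude $\deg(f)=0$---is exactly the classical proof one finds in the literature (including in Rong's paper), so your write-up matches what the paper implicitly invokes.
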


\begin{theorem}\label{mapKod3}
If $f:M^3\longrightarrow N^3$ is a non-zero degree map, then $\kappa^t(M)\ge \kappa^t(N)$.
\end{theorem}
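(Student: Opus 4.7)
The plan is to first reduce to the case where $N$ is prime, then perform a case analysis on $\kappa^t(N) \in \{-\infty, 0, 1\}$. If $N = N_1 \# \cdots \# N_k$, the pinch map $p_j : N \to N_j$ collapsing the other summands has degree one, so $p_j \circ f$ has non-zero degree for each $j$. Granting the theorem for prime targets then gives $\kappa^t(M) \geq \kappa^t(N_j)$ for every $j$; since both $\kappa^t(N)$ and $\max_j \kappa^t(N_j)$ equal the maximum category appearing in the T-decomposition, the reduction is complete, and the case $\kappa^t(N) = -\infty$ is automatic.

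Next assume $N$ is prime with $\kappa^t(N) \geq 0$. Then $N$ is irreducible with infinite fundamental group and hence aspherical; in particular $\pi_1(N)$ is torsion-free and $\pi_2(N) = 0$. Writing $M = M_1 \# \cdots \# M_n$, I would use $\pi_2(N) = 0$ to homotope $f$ so that each connected-sum $2$-sphere maps to a point, realizing $f$ as the ``wedge'' of restrictions $f_i : M_i \to N$ with $\deg(f) = \sum_i \deg(f_i)$. Since $N$ is aspherical, each $f_i$ is determined up to homotopy by $(f_i)_* : \pi_1(M_i) \to \pi_1(N)$. For any summand with $\kappa^t(M_i) = -\infty$, Proposition \ref{infty} forces $\pi_1(M_i)$ to be either finite (hence mapped trivially into torsion-free $\pi_1(N)$) or isomorphic to $\mathbb{Z}$ (so $f_i$ factors through $S^1 = K(\mathbb{Z},1)$ and has degree zero); in either case $\deg(f_i) = 0$. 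Thus $\kappa^t(M) = -\infty$ would force $\deg(f) = 0$, yielding $\kappa^t(M) \geq 0$ and settling $\kappa^t(N) = 0$.

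For $\kappa^t(N) = 1$, if some geometric piece of $N$ is hyperbolic then Lemma \ref{hyperbolic} gives $\kappa^t(M) = 1$ directly via the Gromov norm. Otherwise every geometric piece of $N$ is Seifert, at least one piece $P$ being modeled on $\mathbb{H}^2 \times \mathbb{E}$ or $\widetilde{SL_2(\mathbb{R})}$; incompressibility of JSJ tori makes $\pi_1(P)$ inject into $\pi_1(N)$, and $\pi_1(P)$ contains a non-abelian free subgroup (coming from the hyperbolic $2$-orbifold base of the Seifert fibration), so $\pi_1(N)$ is not virtually solvable. Suppose for contradiction that $\kappa^t(M) = 0$; the sum-of-degrees argument again produces a prime summand $M_j$ of geometry $\mathbb{E}^3$, $Nil$, or $Sol$ (all with virtually solvable $\pi_1$) admitting a non-zero degree map to $N$, and Lemma \ref{rolfsen} then forces $(f_j)_* \pi_1(M_j)$ to have finite index in $\pi_1(N)$, which would make $\pi_1(N)$ virtually solvable---a contradiction. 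This last subcase is the main obstacle: the Gromov norm vanishes on graph manifolds, so Lemma \ref{hyperbolic} is unavailable, and one must instead rely on the group-theoretic dichotomy between virtually solvable (category $0$) and free-subgroup-containing (category $1$ Seifert) fundamental groups together with Lemma \ref{rolfsen}.
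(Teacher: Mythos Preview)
Your argument is correct and takes a genuinely different route from the paper's. The paper works in the opposite direction: fixing $\kappa^t(M)\in\{-\infty,0\}$, it writes $\pi_1(M)$ as a free product of virtually cyclic (resp.\ virtually solvable) groups and argues, via a case analysis on normalizers inside free products, that no subgroup of such a product can contain an infinite-index normal cyclic subgroup that fails to be virtually solvable; this constraint is transferred to $\pi_1(N)$ through Lemma~\ref{rolfsen}, and combined with $\|N\|=0$ from Lemma~\ref{hyperbolic} and the $\pi_1$-characterization of the eight geometries to bound $\kappa^t(N)$. You instead reduce to prime $N$, exploit that such $N$ with $\kappa^t(N)\ge 0$ is aspherical with torsion-free $\pi_1$, split $f$ along the connect-sum spheres of $M$ into maps $f_i:M_i\to N$, and kill the category~$-\infty$ summands by elementary obstruction theory; the remaining case uses only the dichotomy \emph{virtually solvable} versus \emph{contains a non-abelian free subgroup}, again via Lemma~\ref{rolfsen}. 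Your approach is more topological and sidesteps the free-product normalizer computations entirely; the paper's approach, by contrast, records the finer $\pi_1$-classification distinguishing the individual geometries, information your argument neither needs nor recovers.
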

\begin{proof}
Since $3$-manifolds are almost determined by their fundamental groups, let us first recall that how  the fundamental groups determine geometric manifolds. In the following bullets, let $L$ be a geometric manifold.
\begin{itemize}
\item  $\pi_1(L)$ is finite if and only if the geometric structure on $L$ is spherical.
\item $\pi_1(L)$ is virtually cyclic but not finite if and only if the geometric structure on $L$ is $S^2\times \mathbb E$.
\item $\pi_1(L)$ is virtually abelian but not virtually cyclic if and only if the geometric structure on $L$ is Euclidean.
\item $\pi_1(L)$ is virtually nilpotent but not virtually abelian if and only if the geometric structure on $L$ is $Nil$.
\item $\pi_1(L)$ is virtually solvable but not virtually nilpotent if and only if the geometric structure on $L$ is $Sol$.
\item $\pi_1(L)$ has an infinite index normal cyclic subgroup but is not virtually solvable if and only if  the geometric structure on $L$ is either $\mathbb H^2 \times \mathbb E$ or $\widetilde{SL_2(\mathbb R)}$.
\end{itemize}

Here a group $G$ is said to be virtually having some property $P$ if there is a finite index subgroup $H$ of $G$ which has this property $P$. We remark that a virtually abelian/nilpotent/solvable group has an infinite index normal cyclic subgroup. 

Before continuing the proof, let us recall that a $3$-manifold has Gromov norm zero if and only if it is a graph manifold. 
When a graph manifold has $\kappa^t\le 0$, then each irreducible piece in the prime decomposition is (closed and) geometric.  
Especially, the fundamental group is the free product of several groups of types we listed above, because the fundamental group of the connected sum is the free product of the fundamental groups when dimension is greater than two. However, for a general $3$-manifold, its fundamental group is the free product with amalgamation along torus or trivial group by Theorem \ref{geometrization}.

Let us first prove that when $\kappa^t(M)=-\infty$, then $\kappa^t(N)=-\infty$ as well. Hence  $\pi_1(M)$ is the free product of several virtually cyclic groups $G_i$. 
We denote a cyclic subgroup in $G_i$ as $H_i$. Especially, any subgroup of the free product of several virtually cyclic groups like $\pi_1(M)$ cannot contain an infinite index normal cyclic subgroup. We prove it by contradiction. If there is such a cyclic group $C$ which is generated by an element from some $G_i$, then any element $a$ satisfying the property $a^{-1}Ca\subset C$ is contained in $G_i$. This is because otherwise there will be a nontrivial relation involving elements of $G_i$ and of at least another $G_j$, which contradicts to the fact that $\pi_1(M)$ is a free product of $G_i$. On the other hand, if $C$ is generated by a ``mixed" element that is not in a single $G_i$. Without loss, we could assume the generator of $C$ cannot be written as a power of another element. Then any element $a$ satisfying the property $a^{-1}Ca\subset C$ is contained in $C$. By Lemma \ref{rolfsen}, $f_*(\pi_1 (M))$ is of finite index in $\pi_1(N)$. Thus $f_*(\pi_1 (M))$ is the free product of cyclic groups and of finite index in $\pi_1(N)$. Because $f_*$ is a group homomorphism, any subgroup of $f_*(\pi_1 (M))$ also does not contain an infinite index normal cyclic subgroup. So if $\pi_1(N)$ contains a subgroup $G$ with an infinite index normal cyclic subgroup $C'$, then $C''=f_*(\pi_1(M))\cap C'$ is a normal subgroup of $H=f_*(\pi_1(M))\cap G\le f_*(\pi_1(M))$ with infinite index since $H/C''$ is of finite index in $G/C'$. Moreover, the group $C''$ is a nontrivial subgroup of $f_*(\pi_1(M))$ since otherwise $C'\cdot f_*(\pi_1(M))$ will be infinite distinct cosets. Hence $\pi_1(N)$ also does not contain a subgroup with an infinite index normal cyclic subgroup. In addition by Lemma \ref{hyperbolic}, $||N||=0$ and hence $N$ has to be a graph manifold. If a group has a subgroup with an infinite index normal cyclic subgroup, this subgroup will be preserved under free product or free product with amalgamation along tori. Thus $\kappa^t(N)=-\infty$. 

Similarly when $\kappa^t(M)=0$, $\pi_1(M)$ is the free product of  several virtually solvable groups. By the similar reasoning as above,  any subgroup of the free product of virtually solvable groups cannot contain an infinite index normal cyclic subgroup which is not virtually solvable. There are still two possibilities: when the cyclic group is contained in some (virtually solvable group) $G_i$ and when it is not. In the first case, the elements $a$ such that $a^{-1}Ca\subset C$ are contained in $G_i$. Hence any such subgroup containing an infinite index normal cyclic subgroup is virtually solvable. In the second case, these elements $a$ are exactly the centralizers of $C$ and constitute a cyclic group as well. 
Then by Lemma \ref{rolfsen}, $\pi_1(N)$ contains as a subgroup of finite index $f_*(\pi_1 (M))$, which is the free product of several virtually solvable groups as this property is preserved under group homomorphism. In addition, any subgroup of $f_*(\pi_1 (M))$ does not contain an infinite index normal cyclic subgroup which is not solvable. By the same argument as in the case $\kappa^t(M)=-\infty$, so is $\pi_1(N)$. And by Lemma \ref{hyperbolic}, $||N||=0$ and hence $N$ has to be a graph manifold. Thus $\kappa^t(N)\le 0$.

This completes our proof. 
\end{proof}

In the proof of the theorem, we see that non-zero degree map provides finer order of  geometric structures. 
Moreover, Lemma \ref{hyperbolic} shows that $\mathbb H^3$ is a ``larger" geometry than others. 
In general, we think that the Thurston norm and Gabai's result
on taut foliation should be useful for a version of Lemma \ref{hyperbolic} for other geometries. 

\subsection{Comparing with the Kodaira dimensions of $4$-manifolds}

\subsubsection{Additivity}\label{taubesconj}
We mentioned in the introduction that for complex manifolds,
sympletic $4$-manifolds and Lefschetz fibrations for $4$-manifolds, we also have
suitable definitions of Kodaira dimensions. In this section, we
will compare our Kodaira dimension $\kappa^t$ with these ones. 

Let us first recall the definitions.

\begin{definition} Suppose $(M, J)$ is a complex manifold of real dimension $2m$.
The holomorphic Kodaira dimension $\kappa^{h}(M,J)$ is defined as
follows:

\[
\kappa^{h}(M,J)=\left\{\begin{array}{cl}
-\infty &\hbox{ if $P_l(M,J)=0$ for all $l\ge 1$},\\
0& \hbox{ if $P_l(M,J)\in \{0,1\}$, but $\not\equiv 0$ for all $l\ge 1$},\\
k& \hbox{ if $P_l(M,J)\sim cl^k$; $c>0$}.\\
\end{array}\right.
\]

\end{definition}

Here  $P_l(M,J)$ is the $l$-th plurigenus of the  complex manifold
$(M, J)$ defined by $P_l(M,J)=h^0({\mathcal K}_J^{\otimes l})$, with
${\mathcal K}_J$ the canonical bundle of $(M,J)$.

\begin{definition}\label{sym Kod'}
For a minimal symplectic $4$-manifold $(M^4,\omega)$ with symplectic
canonical class $K_{\omega}$,   the Kodaira dimension of
$(M^4,\omega)$
 is defined in the following way:

$$
\kappa^s(M^4,\omega)=\begin{cases} \begin{array}{cll}
-\infty & \hbox{ if $K_{\omega}\cdot [\omega]<0$ or} & K_{\omega}\cdot K_{\omega}<0,\\
0& \hbox{ if $K_{\omega}\cdot [\omega]=0$ and} & K_{\omega}\cdot K_{\omega}=0,\\
1& \hbox{ if $K_{\omega}\cdot [\omega]> 0$ and} & K_{\omega}\cdot K_{\omega}=0,\\
2& \hbox{ if $K_{\omega}\cdot [\omega]>0$ and} & K_{\omega}\cdot K_{\omega}>0.\\
\end{array}
\end{cases}
$$

The Kodaira dimension of a non-minimal manifold is defined to be
that of any of its minimal models.
\end{definition}

Here $K_{\omega}$ is defined as the first Chern class of the
cotangent bundle for any almost complex structure compatible with
$\omega$. 

LeBrun \cite{LeBmrl, LeBcag, LeB} has studied the relations between the Yamabe invariant and Kodaira dimensions. Especially, he proposed a definition of general type for arbitrary $4$-manifolds in \cite{LeB}: namely, when the Yamabe invariant is negative. Recall that the Yamabe invariant is defined as $$Y(M)=\sup_{\mathcal [\hat g]\in \mathcal C} \inf_{g\in [\hat g]}\int_M s_{g}dV_{g},$$ where $g$ is a Riemannian metric on $M$, $s_g$ is the scalar curvature of $g$, and $\mathcal C$ is the set of conformal classes on $M$. When $Y(M)\le 0$, the invariant is simply the supremum of the scalar curvatures of unit-volume constant-scalar-curvature metrics on $M$. There is an interesting question of LeBrun: if $M^4$ admits a symplectic structure and $Y(M^4)<0$, is $\kappa^s(M^4)=2$? It is clear that $\kappa^s(M^4)=2$ would imply $Y(M^4)<0$ since $Y(M)\le -4\pi\sqrt{2K_M^2}$ when $M^4$ is minimal, the Seiberg-Witten invariant is nonzero and $K_M^2\ge 0$ (see {\it e.g.} \cite{LeBmrl}). On the other hand, the answer to LeBrun's question is positive for K\"ahler surfaces \cite{LeBcag}.

Finally, let us recall that the Kodaira dimension $\kappa^l(g,h,n)$ of Lefschetz fibrations defined in \cite{DZ}. Here $g$ and $h$ denote the fiber and base genus of a Lefschetz fibration and $n$ is the number of singular fibers. 

\begin{definition} \label{Lef Kod}
Given a relative minimal $(g,h,n)$ Lefschetz fibration with
$h\ge 1$, define the Kodaira dimension $\kappa^l(g,h,n)$ as follows:

\[
\kappa^l(g,h,n)=\begin{cases}\begin{array}{cll}
-\infty &\hbox{if $g=0$},\\
0& \hbox{if $(g,h,n)=(1,1,0)$},\\
1& \hbox{if $(g,h)=(1, \ge 2)$ or $(g,h,n)=(1,1,>0)$ or $(\ge 2,1,0)$},\\
2& \hbox{if $(g,h)\ge (2,2)$ or $(g,h,n)=(\ge 2,1,\ge 1)$}.
\end{array}
\end{cases}
\]

The Kodaira dimension of a non-minimal Lefschetz fibration with $h\ge 1$ is defined to be that of its minimal models.
\end{definition}

Here, a Lefschetz fibration is called relative minimal if no fiber contains a sphere of self-intersection $-1$.

Now we are ready to compare these Kodaira dimensions with our $\kappa^t$.

{\prop $\kappa^t(M)=\kappa^h(M^3\times S^1)$ when $M^3\times S^1$ admits a
complex structure. $\kappa^t(M)=\kappa^l(M^3\times S^1)$ when $M^3\times S^1$ admits
a Lefschetz fibration. $\kappa^t(M)=\kappa^s(M^3\times S^1)$ when $M^3\times S^1$ admits a
symplectic structure. In all these cases, the manifold $M$ is a surface
bundle over $S^1$. }

\begin{proof}
In \cite{E}, Etg\"{u} proved that when $M\times S^1$ admits a complex structure or a Lefschetz fibration, $M$
is a surface bundle over $S^1$. Then from the genus of
the surfaces, we determine the Kodaira dimension: when the
surface is $S^2$, $T^2$ or $\Sigma_g$ ($g\ge 2$)
respectively, $\kappa^t=-\infty,\ 0$ or $1$ respectively by
Definition \ref{t}. At the same time, $\kappa^l=-\infty, \ 0$ or
$1$ respectively by Definition \ref{Lef Kod}. Furthermore, when $M$
is a surface bundle over circle, $M^3\times S^1$ is a
surface bundle over torus. Thus by classification results on
these manifolds in \cite{DZ}, we also have $\kappa^t=\kappa^h$ in
this case.

When $M^3\times S^1$ admits a sympletic structure, then
$\kappa^t=\kappa^s$ is a consequence of the Taubes conjecture proved by Friedl and Vidussi \cite{FV}. Their theorem says that $M^3\times S^1$ admits a sympletic structure
if and only if $M^3$ is a surface bundle over circle. Then the
same argument as above shows that $\kappa^t=\kappa^s$.
\end{proof}

{\remark The corresponding results of \cite{FV} for circle bundles and mapping tori are further discussed in \cite{FV2} and \cite{LN} respectively. }

%\subsection{Additivity}
Finally, let us discuss more on additivity in the sense of \cite{LZadd} up to dimension $4$. Roughly speaking, we call the Kodaira dimension of a fibration is additive, if the Kodaira dimension of the total space is the sum of the Kodaira dimensions of the fiber and the base (might be in the relative sense if the fibration is not a bundle). For convenience of discussion, we could also define the topological Kodaira dimension $\kappa^t$ for manifolds of dimension up to $2$.

The $2$-dimensional Kodaira dimension is defined in the normal sense by the sign
of the Euler class. Namely, $\kappa^t(S^2)=-\infty$, $\kappa^t(T^2)=0$ and $\kappa^t(\Sigma_g)=1$ for $g\ge 2$. 

The only closed connected $0$-dimensional manifold is a point, and the only closed
connected $1$-dimensional manifold is diffeomorphic to a circle. We define $\kappa^t$ of them to be $0$.

Bundles in dimension three contains three cases: covering spaces, circle bundles over surface and surface bundles over circle. The covering map preserves $\kappa^t$ follows from the fundamental group description of geometric structures in Theorem \ref{mapKod3}. The additivity of other two cases are both straightforward to check by definition. 

For dimension four, discussions in this section imply the additivity for the product $M^3\times S^1$. For a surface bundle over surface, when the base is a positive genus surface, the additivity is established in \cite{DZ}. When the base
is $S^2$, the bundle is either a ruled surface or a Hopf surface;
the latter case occurs when the fiber is $T^2$ and homologically
trivial. Hence the additivity holds.

\subsubsection{Symplectic $4$-manifolds and contact type hypersurfaces}
Let $(M, \omega)$ be a symplectic $4$-manifold. A real hypersurface $V\subset M$ is a contact type hypersurface if $\omega$ can be written in a neighborhood of $V$ as $d\lambda$ for some $1$-form $\lambda$ whose restriction to $V$ is a contact form. An important family of examples consists of unit cotangent bundle of Lagrangian surfaces. Let us first review the embedded Lagrangian surfaces in symplectic $4$-manifolds.

In a projective surface $X(\mathbb C)$, connected components of real algebraic surface $X(\mathbb R)$ are Lagrangian surfaces. It is known that the topology of real algebraic surfaces are bounded by that of its corresponding complex algebraic surface. In general, let $X$ be a projective variety, then 
\begin{itemize}
\item (Thom \cite{Thom}): $\sum_ih^i(X(\mathbb R), \mathbb Z_2)\le \sum_ih^i(X(\mathbb C), \mathbb Z_2)$,
\item (Sullivan \cite{Sull}): $\chi(X(\mathbb R))\equiv \chi(X(\mathbb C))\quad \mod \  2$.
\end{itemize}

It is also known as Comessatti's theorem that when $X$ is smooth, rational, projective surface, and $X(\mathbb R)$ is orientable, then $X(\mathbb R)$ is either a sphere or a torus. Especially, rational $X(\mathbb R)$ are all connected.

For Lagrangian surfaces in a symplectic $4$-manifold, in general we do not have Thom's or Sullivan's results, except when the Lagrangian $L$ is the fixed loci of an anti-symplectic involution $\iota$ with $\iota^*\omega=-\omega$.

 However, we have a generalization of Comessatti's result. The following is well known. We include it for completeness.

\begin{prop}
Let $(M, \omega)$ be a symplectic $4$-manifold with $b^+(M)=1$. If $L\subset (M, \omega)$ is an orientable Lagrangian, then $L$ is either a sphere or a torus. When $L$ is a torus, it is null-homologous.
\end{prop}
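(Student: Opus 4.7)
The plan is to derive the result from two inputs: (i) the cohomological constraint $[\omega]\cdot[L]=0$ forced by $L$ being Lagrangian, and (ii) the self-intersection identity $[L]\cdot[L]=-\chi(L)$ for an orientable Lagrangian surface, combined with the Hodge-index property that $b^+(M)=1$ supplies.

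First I would record the two ingredients attached to $L$. Since $\omega|_L=0$, pairing $[\omega]$ against $[L]$ gives $[\omega]\cdot[L]=0$. The Weinstein tubular neighbourhood theorem identifies a neighbourhood of $L$ symplectomorphically with a neighbourhood of the zero section in $T^*L$, so the normal bundle $\nu_L$ is isomorphic to $T^*L$ as a real rank-two bundle. A short Darboux computation, comparing the orientation on $\nu_L$ induced from those of $M$ (via $\omega\wedge\omega$) and of $L$ with the orientation coming from the isomorphism $\nu_L\cong T^*L\cong TL$, shows this identification reverses orientation. Consequently $e(\nu_L)=-e(TL)$ and
\[
[L]\cdot[L]=-\chi(L).
\]

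Next I would invoke the Hodge-index (or ``light cone'') inequality on the intersection form. Since $b^+(M)=1$ and $[\omega]^2>0$, the orthogonal complement of $[\omega]$ in $H^2(M;\mathbb R)$ is negative definite. Combined with $[\omega]\cdot[L]=0$, this forces $[L]\cdot[L]\le 0$, with equality if and only if the image of $[L]$ in $H_2(M;\mathbb R)$ vanishes.

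Putting the two steps together gives $\chi(L)\ge 0$, with $\chi(L)=0$ precisely when $[L]$ vanishes in real (equivalently, rational) homology. Since $L$ is a closed orientable surface, this pins down $L\cong S^2$ when $\chi(L)>0$ and $L\cong T^2$ when $\chi(L)=0$; in the torus case $[L]$ is null-homologous over $\mathbb Q$, which is the final claim. The only non-routine step in this plan is the orientation bookkeeping behind $[L]\cdot[L]=-\chi(L)$; once that sign is settled, everything else reduces to a one-line application of the Hodge index inequality.
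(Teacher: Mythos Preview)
Your argument is correct and uses the same two core ingredients as the paper's proof: the Lagrangian self-intersection identity $[L]\cdot[L]=-\chi(L)$ and the light-cone/Hodge-index constraint coming from $b^+(M)=1$. The route, however, is more economical than the one the paper takes. The paper first invokes Gompf's perturbation to produce a nearby symplectic form $\Omega$ for which $L$ is symplectic, records $[L]\cdot[\Omega]>0$ and $[\omega]\cdot[\Omega]>0$ to place both $[L]$ and $[\omega]$ in the same half of the light cone, and only then applies the light-cone lemma to force $[L]=0$ when $g(L)\ge 1$. You bypass the Gompf step entirely by observing directly that $[\omega]^{\perp}$ is negative definite, so $[L]\cdot[\omega]=0$ already yields $[L]^2\le 0$ with equality iff $[L]$ is rationally trivial. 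This is genuinely simpler: the perturbation to $\Omega$ buys nothing that the Hodge-index inequality does not already deliver. Your remark that the conclusion in the torus case is null-homology over $\mathbb{Q}$ (rather than over $\mathbb{Z}$) is also accurate; the paper's argument likewise only establishes vanishing in real cohomology.
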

\begin{proof}
According to Gompf \cite{Gom}, we could perturb $\omega$ to a nearby new symplectic form $\Omega$ such that $L$ is a symplectic surface of $(M, \Omega)$ when $L$ is orientable. Now if $g(L)\ge 1$, then by Weinstein neighborhood theorem, the self-intersection number $\hbox{PD}[L]\cdot \hbox{PD}[L]=2g(L)-2\ge 0$. We are now in the situation to apply the light cone lemma: $\hbox{PD}[L]\cdot \hbox{PD}[L]\ge 0$, $[\omega]^2>0$, $\hbox{PD}[L]\cdot [\omega]=0$ and  $\hbox{PD}[L]\cdot [\Omega]>0$, $[\omega]\cdot [\Omega]>0$. Hence, $[L]=0$ and $L$ is a null-homologous torus.
\end{proof}

This especially tells us that all orientable Lagrangians in rational and ruled surfaces are spheres and tori. However, it also tells us that one cannot distinguish rational and ruled surfaces from other symplectic $4$-manifolds of $b^+=1$ by the topology of their Lagrangian submanifolds. Therefore we ask: can we distinguish them by contact type hypersurfaces? 
\begin{question}
If $V\subset (M, \omega)$ is a contact type hypersurface in a rational or ruled symplectic $4$-manifold $M$, do we have $\kappa^t(V)\le 0$? Especially, can $V$ be hyperbolic?
\end{question}

It is shown in \cite{Wel} that the unit cotangent bundle of an orientable hyperbolic Lagrangian surface (these are of geometry $\widetilde{SL_2(\mathbb R)}$) does not embed as a hypersurface of contact type in a rational or ruled symplectic $4$-manifold.

When $M=T^4$ with standard symplectic structure $dX_1\wedge dX_2+dX_3\wedge dX_4$, we also want to know whether a contact type hypersurface could be hyperbolic or not. However, on the other hand, any orientable surface can be realized as Lagrangian submanifold of it. First we have two transversal families of Lagrangian tori $\Gamma_{13}=\{X_1\times \{x_2\}\times X_3 \times \{x_4\}\}$ and $\Gamma_{24}=\{\{x_1\}\times X_2\times \{x_3\} \times X_4\}$. Members from $\Gamma_{13}$ and $\Gamma_{24}$ transversally intersect at one point. Use Lagrangian surgery \cite{Pol} to resolve the intersection point, we will have a Lagrangian genus two surface. Similarly, if we resolve the intersection points of the configuration with $g-1$ different members from $\Gamma_{13}$ and one member from $\Gamma_{24}$, we would have a Lagrangian surface of genus $g$.

\section{Kodaira dimensions and geometric structures of $4$-manifolds}

\subsection{Geometries in dimension four} As we have seen, there are Kodaira dimensions available for complex and symplectic $4$-manifolds. On the other hand, we also have $19$ geometries in dimension $4$ (see \cite{Filip}). It is natural to ask whether we could define Kodaira dimension for $4$-manifolds, at least for irreducible ones, through the $19$ geometries.
In the following, we will briefly discuss the relation between the $19$ geometries in dimension $4$ and the Kodaira dimension. In dimension $4$, we do not have the decomposition theorem as in dimension $3$. Hence the discussion here is only about closed manifolds with one of the $19$ geometries.

First, we separate the $19$ geometries into $4$ categories: 
$$\begin{array}{rl}
    -\infty: &\hbox{$\mathbb P^2(\mathbb C)$, $S^4$, $S^3 \times \mathbb{E}$, $S^2 \times S^2$, $S^2 \times \mathbb{E}^2$, $S^2 \times \mathbb{H}^2$, $Sol_0^4$ and $Sol_1^4$ };\cr
     0: &\hbox{$\mathbb{E}^4$, $Nil^4$, $Nil^3 \times \mathbb E$ and $Sol_{m,n}^4$(including $Sol^3\times \mathbb E$)};\cr
     1: &\hbox{$\mathbb H^2 \times \mathbb E^2$, $\widetilde{SL_2} \times \mathbb E$, $\mathbb H^3 \times \mathbb E$ and $F^4$};\cr
     2: &\hbox{$\mathbb{H}^2(\mathbb C)$, $\mathbb H^2 \times \mathbb H^2$ and $\mathbb{H}^4$}.
\end{array}$$

Let us recall the definition of non-product geometries in the list. First, $S^4$, $\mathbb H^4$, $\mathbb P^2(\mathbb C)$ and $\mathbb H^2(\mathbb C)=SU(2, 1)/S(U(2)\times U(1))$ are Riemannian symmetric spaces. 

Next, nilpotent Lie groups and solvable Lie groups are realized by semidirect product: $Nil^4=\mathbb R^3\rtimes_U \mathbb R$, $Sol^4_{m, n}=\mathbb R^3\rtimes_{T_{m,n}}\mathbb R$. Here $U(t)=\exp(tB)$ and $T_{m, n}(t)=\exp(tC_{m, n})$ with \[B=\begin{pmatrix}
0 & 1 & 0 \\
0 & 0 & 1 \\
0 & 0 & 0 \end{pmatrix}, \ \ \  C_{m,n}= \begin{pmatrix}
a & 0 & 0 \\
0 & b & 0 \\
0 & 0 & c \end{pmatrix}\]
where $e^a>e^b> e^c$ are roots of $\lambda^3-m\lambda^2+n\lambda-1=0$ with $m, n$ positive integers. Especially, $a>b>c$ are real and $a+b+c=0$. If $m=n$, then $b=0$ and $Sol^4_{m,n}=Sol^3\times \mathbb E$. 

When there are two equal roots for $\lambda^3-m\lambda^2+n\lambda-1=0$, i.e. when $m^2n^2+18mn=4(m^3+n^3)+27$, the geometry is denoted by $Sol_0^4$. There is another solvable group $Sol_1^4$ which is represented as a matrix group 
 \[B=\begin{pmatrix}
1 & \beta & \gamma \\
0 & a & \alpha \\
0 & 0 & 1 \end{pmatrix}, \ \ \  a, \alpha, \beta, \gamma\in \mathbb R, a>0\]

Finally we have the geometry $F^4$ with isometry group $\mathbb R^2\rtimes SL(2, \mathbb R)$ with the natural action of $SL(2, \mathbb R)$ on $\mathbb R^2$. The geometry $F^4$ is the only geometry in the list to admit no compact model (although, by definition of geometry, we have some models with finite volume). However, it does admit complex structures and even K\"ahler structures.

In \cite{W}, Wall studies the relations of complex structures and the geometries. In summary, the geometries
$$S^4, \mathbb H^4, \mathbb H^3 \times \mathbb E, Nil^4, Sol_{m,n}^4$$
do not admit a complex structure compatible with the geometric structure. In the remaining cases except $Sol_1^4$, the complex structure on the maximal relevant geometry is unique. For $Sol^4_1$ we have two complex structures, denoted by $Sol^4_1$ and $Sol'^4_1$. Among those admit complex structures, the geometries
$$S^3 \times \mathbb E, Sol_0^4, Sol_1^4, Sol'^4_1, Nil^3 \times \mathbb E, \widetilde{SL_2} \times \mathbb E$$
admit no compatible K\"ahler structures. The first four are in Class VII of Kodaira's list of complex surfaces. The rest two are in Class VI. A compact model for $Nil^3 \times \mathbb E$ is so-called Kodaira-Thurston manifold.  All the remaining geometries admit compatible K\"ahler structures. Moreover, the Kodaira dimension of these K\"ahler structures is the same as the category number of the corresponding geometric structures. %For more discussions on geometric structures on complex surfaces, see \cite{PP}.

Let us collect some useful information for the hyperbolic geometry $\mathbb H^3$ extracted from \cite{Scott}. We use the upper half $3$-space model $$\mathbb R^3_+=\{(x, y, z)\in \mathbb R^3| z>0\}$$ with the metric $$ds^2=\frac{1}{z^2}(dx^2+dy^2+dz^2).$$ The isometry group is generated by reflections and an isometry is determined by its restriction to  to $2$-sphere at infinity $\mathbb C\cup \{\infty\}$, where the $xy$-plane is identified with $\mathbb C$.

The group of orientation preserving isometries of $\mathbb H^3$ can be identified with the group of M\"obius transformations $PSL(2, \mathbb C)$ of $\mathbb C\cup \{\infty\}$. If we identify the point $(x, y, z)\in \mathbb R^3_+$ with the quaternion $x+yi+zj$. The $2\times 2$ complex matrix $\begin{pmatrix}
a & b  \\
c & d  \end{pmatrix}$ acts on $\mathbb R^3_+$ by $$w\mapsto (aw+b)(cw+d)^{-1},$$where $w$ is a quaternion of the form $x+yi+zj, z>0$. This yields all orientation preserving isometries of $\mathbb H^3$. It follows that each orientation preserving isometry of $\mathbb H^3$ fixes one or two points of the sphere at infinity. There isometries are call {\it parabolic} and {\it hyperbolic} respectively.  If $\alpha$ is an isometry of $\mathbb H^3$, let $\hbox{fix}(\alpha)$ denote the set of points on the sphere at infinity which are fixed by $\alpha$.

\begin{lemma}[Lemma 4.5 in \cite{Scott}]\label{4.5scott}
\begin{enumerate}
\item If $\alpha$ and $\beta$ are two non-trivial orientation preserving isometries of $\mathbb H^3$, then $\alpha$ and $\beta$ commute if and only if $\hbox{fix}(\alpha)=\hbox{fix}(\beta)$. 

\item If $\alpha$ is a non-trivial orientation preserving isometry of $\mathbb H^3$, then the group $C(\alpha)$ of all orientation preserving isometries which commute with $\alpha$ is abelian and isomorphic to $\mathbb R^2$ or $S^1\times \mathbb R$.
\end{enumerate}
\end{lemma}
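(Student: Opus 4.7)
My plan is to use the identification $\mathrm{Isom}^+(\mathbb H^3)\cong PSL(2,\mathbb C)$ recalled in the excerpt and reduce both assertions to explicit $2\times 2$ matrix computations after conjugating $\alpha$ into normal form. Since every non-trivial M\"obius transformation of $\hat{\mathbb C}=\mathbb C\cup\{\infty\}$ fixes either one point (parabolic type) or exactly two points on $\hat{\mathbb C}$, and $PSL(2,\mathbb C)$ acts transitively on singletons and on unordered pairs of points, after conjugating I may assume $\alpha(z)=z+1$ in the parabolic case (so $\mathrm{fix}(\alpha)=\{\infty\}$) or $\alpha(z)=\lambda z$ for some $\lambda\in\mathbb C^*\setminus\{1\}$ otherwise (so $\mathrm{fix}(\alpha)=\{0,\infty\}$).

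Next I would compute $C(\alpha)$ directly. Writing a lift of a candidate $\beta$ as $\tilde\beta=\begin{pmatrix}a & b\\ c & d\end{pmatrix}\in SL(2,\mathbb C)$, commutation in $PSL(2,\mathbb C)$ amounts to $\tilde\alpha\tilde\beta=\pm\tilde\beta\tilde\alpha$ in $SL(2,\mathbb C)$, which is a small linear system in $a,b,c,d$. Taking the $+$ sign, the parabolic case forces $c=0$ and $a=d=\pm 1$, giving $\beta(z)=z+b$ and $C(\alpha)\cong\mathbb C\cong\mathbb R^2$; the diagonal case with $\lambda^2\neq 1$ forces $b=c=0$, giving $\beta(z)=\mu z$ for some $\mu\in\mathbb C^*$ and $C(\alpha)\cong\mathbb C^*/\{\pm 1\}\cong S^1\times\mathbb R$. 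In both cases $\mathrm{fix}(\beta)=\mathrm{fix}(\alpha)$, which proves the forward direction of (1) as well as the structure claim of (2); the converse direction of (1) is then immediate, since after simultaneously placing $\alpha$ and $\beta$ in the standard form with common fixed set they visibly commute.

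The main obstacle I foresee is the exceptional case $\lambda=-1$, where $\alpha$ is a $\pi$-rotation about the geodesic from $0$ to $\infty$. There the ``$-$'' sign $\tilde\alpha\tilde\beta=-\tilde\beta\tilde\alpha$ also descends to a commutation in $PSL(2,\mathbb C)$ and contributes the antidiagonal lifts $\begin{pmatrix}0 & b\\ c & 0\end{pmatrix}$, representing $\pi$-rotations about geodesic axes perpendicular to that of $\alpha$. These commute with $\alpha$ yet swap its two fixed points at infinity, and together with the diagonal subgroup they generate a non-abelian centralizer. So the lemma as stated is really a claim about elements whose lifts satisfy $\lambda^2\neq 1$, i.e.\ of order greater than two; I would record this mild restriction, which is consistent with Scott's applications to fundamental groups of aspherical $3$-manifolds, and then the argument above is complete.
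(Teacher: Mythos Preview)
The paper does not supply its own proof of this lemma: it is quoted verbatim as Lemma~4.5 of Scott's survey \cite{Scott} and used as a black box in the arguments for Proposition~\ref{H3R} and Theorem~\ref{0gromovnorm}. So there is nothing in the paper to compare your argument against.

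That said, your approach is the standard one and is essentially what Scott does: normalise $\alpha$ by conjugation to $z\mapsto z+1$ or $z\mapsto\lambda z$ and read off the centraliser by a direct $2\times 2$ matrix computation in $SL(2,\mathbb C)$. Your calculations in the parabolic case ($C(\alpha)\cong\mathbb C\cong\mathbb R^2$) and in the generic diagonal case ($C(\alpha)\cong\mathbb C^*\cong S^1\times\mathbb R$) are correct, and your derivation of part~(1) from these is clean.

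Your observation about the exceptional case $\lambda=-1$ is also correct and worth making explicit. When $\alpha$ is an involution (a half-turn about a geodesic axis), the anticommuting lifts $\tilde\alpha\tilde\beta=-\tilde\beta\tilde\alpha$ do descend to genuine commutations in $PSL(2,\mathbb C)$, and the resulting half-turns about perpendicular axes swap the two endpoints of $\mathrm{fix}(\alpha)$; the full centraliser in this case is the non-abelian extension of $\mathbb C^*$ by $\mathbb Z/2$. So both assertions of the lemma, read literally, fail for order-two elements. In Scott's setting this is harmless because the groups in question act freely on $\mathbb H^3$ and hence are torsion free, and likewise every application in the present paper is to torsion-free discrete groups (kernels of projections acting freely on $\mathbb H^3\times\mathbb E$, $\mathbb H^2\times\mathbb E^2$, $\widetilde{SL_2}\times\mathbb E$). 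Your suggested fix---record the hypothesis that $\alpha$ has order greater than two---is exactly right, and with it your argument is complete.
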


We remark the orientation preserving isometry  group of $\mathbb H^2$ can be identified with $PSL(2, \mathbb R)$. There are three types of orientation preserving isometry: rotations, parabolics and hyperbolics. They are characterized by the number of points, {\it i.e.} $0$, $1$ or $2$, left fixed on the circle at infinity.  Thus similar results of Lemma \ref{4.5scott} hold. For the second statement, $C(\alpha)$ is abelian and is isomorphic to $S^1$ if $\alpha$ is a rotation and isomorphic to $\mathbb R$ otherwise.

We have the following
\begin{prop}\label{H3R}
Let $G$ be a discrete group of isometries of $\mathbb H^3\times \mathbb E$ which acts freely and has quotient $M$. Then one of the following three statements holds:
\begin{enumerate}
\item the natural foliation of $\mathbb H^3\times \mathbb E$ by lines descends to an $S^1$ action on $M$;
\item the natural foliation of $\mathbb H^3\times \mathbb E$ by lines gives $M$ the structure of a line bundle over some hyperbolic $3$-manifold;
\item the natural foliation of $\mathbb H^3\times \mathbb E$ by lines descends to a foliation of $M$ by lines in which each line has non-closed image in $M$. In this case, $G$ must by isomorphic to $\mathbb Z$, $\mathbb Z\times \mathbb Z$ or the Klein bottle group. 
\end{enumerate}
 Especially, in the last two cases $M$ is not a closed manifold.
\end{prop}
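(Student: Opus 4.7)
The plan is to exploit the product structure of the isometry group. Because the factors have different dimensions and constant sectional curvatures, $\mathrm{Isom}(\mathbb H^3 \times \mathbb E) = \mathrm{Isom}(\mathbb H^3) \times \mathrm{Isom}(\mathbb E)$, so each $g \in G$ decomposes as $(\alpha_g,\beta_g)$ with projections $p_1,p_2$. The vertical foliation by leaves $\{x\}\times \mathbb E$ is preserved by every isometry, and $G$ permutes leaves via $\alpha_g$ while translating within each leaf via $\beta_g$. First I would analyze the leaf stabilizer $S_x=\{g\in G:\alpha_g(x)=x\}$: since $G$ acts freely and any orientation-reversing isometry of $\mathbb E$ has a fixed point, $S_x$ consists of $\mathbb E$-translations, injects into $\mathbb R$ via $\beta$, and being discrete is either trivial or $\mathbb Z$. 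So every leaf descends to either an embedded line or a circle in $M$.

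Next I would split according to the kernel $K=\ker(p_1|_G)$ and the image $G_1=p_1(G)$. If $K$ is nontrivial it is generated by a pure vertical translation, and the full vertical $\mathbb R$-action commutes with the orientation-preserving subgroup $G^+$; after quotienting by $K$ it descends to an $S^1$-action on $M$ (passing to an index-$2$ cover if $G$ contains reflections on the $\mathbb E$-factor). This gives case (1). If $K$ is trivial and $G_1$ is discrete in $\mathrm{Isom}(\mathbb H^3)$, I would first show $G_1$ acts freely on $\mathbb H^3$: a would-be nontrivial fixed element $\alpha_g$ cannot have finite order (otherwise some power lands in $K=0$ while $\beta_g^n\ne 0$), must therefore be an irrational elliptic rotation, so its iterates accumulate at the identity, contradicting discreteness of $G_1$. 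Hence $\mathbb H^3/G_1$ is a hyperbolic $3$-manifold and the projection $\mathbb H^3\times\mathbb E\to\mathbb H^3$ descends to a line bundle $M\to\mathbb H^3/G_1$, which is case (2).

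The main case is when $K$ is trivial but $G_1$ is not discrete. Then there exist distinct $g_n\in G$ with $\alpha_{g_n}$ convergent; replacing $g_n$ by $g_ng_1^{-1}$ gives $\alpha_{g_n}\to\mathrm{id}$, and discreteness of $G$ forces $\beta_{g_n}\to\infty$ in $\mathbb R$. Applied to any leaf $\{x\}\times\mathbb E$, the images $\{\alpha_{g_n}(x)\}\times\mathbb E$ approach $\{x\}\times\mathbb E$ in $\mathbb H^3$ while their $\mathbb E$-levels escape, which is exactly the non-closedness of the leaf in $M$ required by case (3). To identify $G$ as $\mathbb Z$, $\mathbb Z^2$, or the Klein bottle group, I would invoke Lemma \ref{4.5scott}: elements of $G_1$ near the identity centralize one another, and a commutator argument forces the closure $\overline{G_1}$ to lie in a single abelian centralizer, which by Lemma \ref{4.5scott} is $\mathbb R^2$ or $S^1\times\mathbb R$. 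Thus $G$ is virtually abelian and sits discretely in an abelian subgroup of $\mathrm{Isom}(\mathbb H^3)\times\mathrm{Isom}(\mathbb E)$.

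The hard step is this last rank-and-freeness analysis: one must rule out $\mathbb Z^3$ and pin the rank to at most $2$. I expect this to combine the non-discreteness of $G_1$ (which forces an irrational slope between the two factors, so that one direction of the ambient abelian group projects densely and cannot contribute a rank to a discrete $G$) with freeness (which further restricts which one-parameter subgroups of $\mathrm{Isom}(\mathbb H^3)$ can be approximated). A separate orientation bookkeeping on the $\mathbb E$-factor then distinguishes $\mathbb Z^2$ from the Klein bottle case. Finally, noncompactness of $M$ in cases (2) and (3) follows since a line bundle over a $3$-manifold has noncompact fiber, and a rank-$\le 2$ group cannot act with finite covolume on the $4$-dimensional $\mathbb H^3\times\mathbb E$.
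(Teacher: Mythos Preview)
Your setup and the treatment of cases (1) and (2) track the paper's proof closely (in fact you are more careful than the paper in verifying that $G_1$ acts freely on $\mathbb H^3$ in case (2)). The substantive difference, and the genuine gap, is in case (3).

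Two steps in your case (3) are not justified. First, the claim that ``elements of $G_1$ near the identity centralize one another'' is not true in $PSL(2,\mathbb C)$ without further input; nearby loxodromics with distinct axes do not commute, and a bare commutator/closure argument does not force $\overline{G_1}$ into an abelian centralizer. Second, even granting that $G$ sits virtually in some abelian group $\mathbb R^2$ or $S^1\times\mathbb R$ inside $\mathrm{Isom}(\mathbb H^3)$, together with the $\mathbb E$-factor this ambient abelian group has rank $3$, and your ``irrational slope'' idea is not enough to rule out a discrete $\mathbb Z^3$ inside it. You acknowledge this is the hard step, but what you sketch does not close it.

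The paper resolves both issues with one geometric move you are missing. Instead of taking closures, it looks at the \emph{other} projection $G\to\mathrm{Isom}(\mathbb E)$ and lets $L\subset\Gamma$ be the image of its kernel; $L$ is automatically discrete in $\mathrm{Isom}(\mathbb H^3)$. Indiscreteness of $\Gamma$ then produces a nontrivial element centralizing $L$, and Lemma~\ref{4.5scott} forces $L$ (and then an index-two $\Gamma_1$) to be abelian with a common fixed set on $S^2_\infty$. The payoff is concrete: an invariant geodesic (hyperbolic case) or invariant horocycle line (parabolic case) $l\subset\mathbb H^3$, so $G_1$ preserves and acts \emph{discretely} on the Euclidean \emph{plane} $l\times\mathbb E$. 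A discrete free group of isometries of $\mathbb E^2$ is $\mathbb Z$ or $\mathbb Z\times\mathbb Z$, which is exactly the rank bound you were looking for. (The Klein bottle group appears only from the index-two passage.) If $L$ is trivial the argument is even simpler, since then $G$ injects into $\mathrm{Isom}(\mathbb E)$ and is virtually abelian for free. I would rewrite your case (3) around this invariant-line reduction rather than the closure argument.
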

\begin{proof}
We identify the isometry group of $\mathbb H^3\times \mathbb E$ with $\hbox{Isom}(\mathbb H^3)\times \hbox{Isom}(\mathbb R)$. As $G$ is discrete, $K=G\cap \hbox{Isom}(\mathbb R)$ is discrete and so must be $1$, $\mathbb Z_2$, $\mathbb Z$ or $D(\infty)$. As $G$ acts freely, $K$ is $1$ or $\mathbb Z$. Let $\Gamma$ denote the image of the projection $G\rightarrow \hbox{Isom}(\mathbb H^3)$. Then we have the exact sequence $$0\rightarrow K\rightarrow G\rightarrow \Gamma\rightarrow 0.$$ In the case when $K$ is infinite cyclic, each line $\{x\}\times \mathbb E$ descends to a circle. Hence the quotient manifolds would admit $S^1$ actions, or more precisely Seifert bundle structures, which implies the quotient manifolds have Gromov norm $0$. When $K$ is trivial, then $G\cong \Gamma$. If $\Gamma$ is a discrete group of isometries of $\mathbb H^3$, then the quotient would be a line bundle over $ \mathbb H^3/\Gamma$. For this case, the quotient is not closed. 

When $\Gamma$ is an indiscrete group of $\hbox{Isom}(\mathbb H^3)$.
Replacing $G$ by a subgroup of index two if necessary, we can suppose that $\Gamma$ is orientation preserving. We will now consider the projection $G\rightarrow \hbox{Isom}(\mathbb E)$. Let $L$ be the image of the kernel under the isomorphism $G\rightarrow \Gamma$. $L$ is a discrete group of isometries of $\mathbb H^3$. 

Suppose $L$ is non-trivial. Conjugation of $L$ by each element of $\Gamma$ induces an automorphism of $L$. As $L$ is discrete, an element of $\Gamma$ sufficiently close to the identity must commute with $L$. Since $\Gamma$ is not discrete, there must be a non-trivial element of $\Gamma$ which centralizes $L$. Now the centralizer of any non-trivial element in $\hbox{Isom}^+(\mathbb H^3)=PSL(2, \mathbb C)$ is always abelian, and actually $\mathbb R\times \mathbb R$ or $S^1\times \mathbb R$, by Lemma \ref{4.5scott}. So it follows $L$ is abelian. As $L$ is discrete and torsion free, $L$ must be $\mathbb Z$ or $\mathbb Z\times \mathbb Z$. On the other hand, $\Gamma$ is indiscrete, hence $\Gamma$ has a subgroup $\Gamma_1$ of index at most two which centralizes $L$. Since $L$ is abelian, each element has the same fixed points set by Lemma \ref{4.5scott}. Again by the same lemma, each element in $\Gamma_1$ has the same fixed points set and thus $\Gamma_1$ is abelian. If $\Gamma_1$ consists of hyperbolic isometries there is a unique geodesic $l$ in $\mathbb H^3$ left invariant by $\Gamma_1$. Let $G_1$ be the subgroup of $G$ corresponding to $\Gamma_1$. We see that $G_1$ leaves invariant the plane $l\times \mathbb R$. As this plane is isometric to Euclidean plane and $G_1$ must act discretely on it, we know $G_1$ is $\mathbb Z$ or $\mathbb Z\times \mathbb Z$. 

If $\Gamma_1$ consists of parabolic isometries, without loss we could assume the common fixed point at infinite $S^2$ is $\infty$. Hence $\Gamma_1$ leaves invariant each line $x=y=\hbox{const}$. Taking $l$ to be one of these lines, same argument as the above hyperbolic isometry case applies to complete the proof when $L$ is non-trivial.

When $L$ is trivial, since the orientation preserving part of  $\hbox{Isom}(\mathbb R)$ is isomorphic to $\mathbb R$. Hence $G$ has a subgroup of index at most two which is abelian. All the arguments above apply to show $G_1$ is actually $\mathbb Z$ or $\mathbb Z\times \mathbb Z$.  Especially, all these analysis imply that the quotient is not a closed manifold if not fibered by $S^1$.
\end{proof}

We have the following characterization of geometric closed $4$-manifolds in the top category by Gromov norm.

\begin{theorem}
A closed geometric $4$-manifold has nonzero Gromov norm if and only if it is in category $2$.

\end{theorem}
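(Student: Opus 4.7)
The ``if'' direction, that the three category-$2$ geometries $\mathbb H^4$, $\mathbb H^2\times\mathbb H^2$, and $\mathbb H^2(\mathbb C)$ have strictly positive Gromov norm, is classical and is already cited in the preceding Theorem \ref{0gromovnorm}. So the substantive task is the ``only if'' direction: given a closed $4$-manifold $M$ modeled on any of the sixteen geometries in categories $-\infty$, $0$, or $1$, we must show $\|M\|=0$. The plan is to invoke two classical vanishing criteria---Gromov's theorem that $\|M\|=0$ whenever $\pi_1(M)$ is amenable, and Yano's theorem that $\|M\|=0$ whenever $M$ admits a nontrivial smooth $S^1$-action---and to check each model geometry against one or the other.

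First, the amenable-$\pi_1$ criterion disposes of every geometry whose closed quotients have virtually solvable fundamental group. This handles the four category-$0$ geometries $\mathbb E^4$, $Nil^4$, $Nil^3\times\mathbb E$, $Sol_{m,n}^4$ together with $Sol_0^4$ and $Sol_1^4$ in category $-\infty$, as well as $\mathbb P^2(\mathbb C)$ and $S^4$, which have finite $\pi_1$. The $S^1$-action criterion dispatches the remaining product geometries, namely $S^3\times\mathbb E$, $S^2\times S^2$, $S^2\times\mathbb E^2$, $S^2\times\mathbb H^2$ in category $-\infty$ and $\mathbb H^2\times\mathbb E^2$, $\widetilde{SL_2}\times\mathbb E$ in category $1$: in each, passing to a finite cover if necessary, one extracts a free $S^1$-action either from an $S^1\subset SO(3)$ acting on the $S^2$ component, from the $\widetilde{SL_2}$ circle fibers, or from a lattice circle in a Euclidean factor, and invokes Yano. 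Note that $F^4$ admits no closed models, as mentioned in the preceding subsection, and so requires no argument.

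The one delicate case is $\mathbb H^3\times\mathbb E$, because the projection of the deck group to $\hbox{Isom}(\mathbb H^3)$ may be indiscrete, and so the $\mathbb E$-factor foliation need not descend to an $S^1$-action on the quotient. This is precisely the content of Proposition \ref{H3R}: among its three alternatives, the first gives an honest $S^1$-action on $M$ so that Yano applies; the second forces $M$ to be non-closed, so nothing has to be checked; and in the third alternative the deck group must be $\mathbb Z$, $\mathbb Z^2$, or the Klein bottle group, each of which is amenable, so that Gromov's criterion applies. This is the main obstacle of the whole argument, and once Proposition \ref{H3R} is available the geometry-by-geometry verification goes through uniformly to complete the proof.
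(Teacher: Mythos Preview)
Your overall strategy matches the paper's: nonvanishing for category $2$ via locally symmetric spaces, then vanishing for the rest via either amenable $\pi_1$ or an $S^1$-action. The amenable cases and the $S^2$/$S^3$-factor cases are handled correctly, and your use of Proposition~\ref{H3R} for $\mathbb H^3\times\mathbb E$ is exactly right (note, incidentally, that the proposition says case~(3) also forces $M$ to be non-closed, so for closed $M$ only the $S^1$-action alternative survives; your amenability remark there is unnecessary though harmless).

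The gap is in your treatment of $\mathbb H^2\times\mathbb E^2$ and $\widetilde{SL_2}\times\mathbb E$. You assert that ``passing to a finite cover if necessary, one extracts a free $S^1$-action \ldots\ from the $\widetilde{SL_2}$ circle fibers, or from a lattice circle in a Euclidean factor,'' but this is precisely what needs to be proved and fails for the same reason you correctly flagged in the $\mathbb H^3\times\mathbb E$ case. For $\mathbb H^2\times\mathbb E^2$, the intersection $K=G\cap\hbox{Isom}(\mathbb E^2)$ may be trivial, so there is no obvious ``lattice circle''; one must then analyze the (possibly indiscrete) image $\Gamma\subset\hbox{Isom}(\mathbb H^2)$ and the kernel $L$ of the projection $G\to\hbox{Isom}(\mathbb E^2)$, exactly parallel to Proposition~\ref{H3R}. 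For $\widetilde{SL_2}\times\mathbb E$, the fibers of $\widetilde{SL_2}\to\mathbb H^2$ are lines, not circles, and whether they descend to circles in the quotient again depends on kernels being nontrivial; the paper has to chase the composition $\hbox{Isom}(\widetilde{SL_2}\times\mathbb E)\to\hbox{Isom}(\widetilde{SL_2})\to\hbox{Isom}(\mathbb H^2)$ through several subcases. In the paper's proof these two geometries are not side remarks but the bulk of the argument, each receiving an analysis as long as (or longer than) Proposition~\ref{H3R} itself. You have identified $\mathbb H^3\times\mathbb E$ as the delicate case, but in fact all three category-$1$ product geometries are delicate for the same structural reason, and your proposal as written does not close the other two.
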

\begin{proof}First we show that for any geometric manifold in category $2$, the Gromov norm is non-vanishing. This is because if $M$ is a closed oriented locally symmetric space 
of non-compact type, then $||M||>0$ \cite{LS}. The examples include the three geometries in category $2$. Especially, if $M$ is hyperbolic $$||M||=\frac{1}{v_4}\hbox{Vol}(M),$$ where $v_4$ is the maximal volume of ideal geodesic triangles. If $M$ has geometry $\mathbb H^2\times \mathbb H^2$ \cite{BK}, $$||M||=\frac{3}{2\pi^2}\hbox{Vol}(M).$$

We then want to show that the Gromov norm vanishes for geometric $4$-manifolds in the other three categories. Closed manifolds of geometries $\mathbb E^4$, $Nil^4$, $Nil^3 \times \mathbb E$, $Sol_0^4$, $Sol_1^4$ and $Sol_{m,n}^4$ would have solvable fundamental group. The Gromov norm of such a closed manifold is zero since a solvable group is in particularly amenable. For the geometric closed manifolds in category $-\infty$, if the geometry is $\mathbb P^2(\mathbb C)$, $S^4$ or $S^2\times S^2$, then the fundamental group is finite and thus amenable. This implies the Gromov norm is zero. All the rest in category $-\infty$ have a factor of $S^2$ or $S^3$. Let $G$ be the discrete group of isometries of these geometries, then the original natural foliation by $2$-spheres or $3$-spheres of each geometry is preserved by the isometries. Hence any such geometric manifold would inherit a folation by $2$ or $3$ dimensional spherical geometries.
Since spherical geometries in dimension $2$ or $3$ admit nontrivial group actions, especially $S^1$ actions, this action would extend to the whole geometric manifold. Hence by \cite{Yano}, the Gromov norm is zero.

For geometries in category $1$, since $F^4$ does not admit any closed manifold model, we will focus on the rest three. For $\mathbb H^3\times \mathbb E$, the statement follows from Proposition \ref{H3R}. We have two more cases.
\smallskip
\smallskip

\noindent{\bf 1. $\mathbb H^2\times \mathbb E^2$:}

\smallskip
\smallskip
For $\mathbb H^2\times \mathbb E^2$, its isometry group is identified with $\hbox{Isom}(\mathbb H^2)\times \hbox{Isom}(\mathbb R^2)$. Hence $K=G\cap \hbox{Isom}(\mathbb R^2)$ is discrete and torsion free, so has to be $1$, $\mathbb Z$ or $\mathbb Z\times \mathbb Z$. If $K$  is non-trivial, then at least one line in $E^2$ descends to a circle. Hence the quotient manifolds admit an $S^1$ action, which implies the Gromov norm is zero. 

If $K=1$, we know again $G\cong \Gamma$ where $\Gamma$ is the image of the projection $G\rightarrow \hbox{Isom}(\mathbb H^2)$. If $\Gamma$ is discrete, $G$ cannot be cofinite since $$\hbox{vol}(\mathbb H^2\times \mathbb E^2/G)=\hbox{area}(\mathbb H^2/\Gamma)\cdot \hbox{area}(\mathbb E^2/K).$$ In fact, $M$ is a $2$-dimensional vector bundle over the hyperbolic surface $\mathbb H^2/\Gamma$. If $\Gamma$ is not discrete, we will show that $G$ has a subgroup of finite index isomorphic to $\mathbb Z$, $\mathbb Z\times \mathbb Z$ or $\mathbb Z\times \mathbb Z\times \mathbb Z$. The argument is similar to that of Proposition \ref{H3R}.  Let $L$ be the image of the kernel of the projection $G\rightarrow \hbox{Isom}(\mathbb E^2)$ under the isomorphism $G\rightarrow \Gamma$.  $L$ is a discrete group of isometries of $\mathbb H^2$. Suppose $L$ is non-trivial first. As $\Gamma$ is indiscrete, there must be a non-trivial element of $\Gamma$ which centralizes $L$. Now the centralizer of any non-trivial element in $PSL(2, \mathbb R)$ is always abelian, hence $L$ is $\mathbb Z$ since it is discrete and torsion free as well. Hence $\Gamma$ has a subgroup $\Gamma_1$ of index at most two which centralizes $L$, and $\Gamma_1$ is abelian.  By the remark after Lemma \ref{4.5scott} for $\hbox{Isom}(\mathbb H^2)$, $\Gamma_1$ consists of hyperbolic or parabolic isometries since $L\cong \mathbb Z$ prevents the case of rotations.
Moreover, whenever $\Gamma_1$ consists of hyperbolic isometries or parabolic isometries, there will be an Euclidean space $l\times \mathbb E^2\subset \mathbb H^2\times \mathbb E^2$ left invariant by $G_1$ and $G_1$ must act discretely on it. It follows that $G_1$ is $\mathbb Z$, $\mathbb Z\times \mathbb Z$ or $\mathbb Z\times \mathbb Z\times \mathbb Z$. 

If $L$ is trivial, then $G$ is identical to both $\Gamma$ and its image under the projection $G\rightarrow \hbox{Isom}(\mathbb E^2)$, say $F$. 
If $F$ is discrete, then it is clear that $M$ is an $\mathbb H^2$ bundle over the Euclidean surface $\mathbb E^2/F$.

Hence we could assume both $F$ and $\Gamma$ are not discrete. Recall the exact sequence $$0\rightarrow \mathbb R^2\rightarrow \hbox{Isom}(\mathbb E^2)\rightarrow O(2)\rightarrow 0.$$ 
Then $J=F\cap \mathbb R^2$ is a normal subgroup of $F$.

Since translations with same length are in the same conjugacy class, if $J$ is not discrete, we could choose a discrete subgroup $J'$ which is also normal in $F$. Since $J'$ is discrete and $F$ is indiscrete, $F$ has a subgroup $F_1$ of index at most two which centralizes $J'$. Since the rotations and translations do not commute, $F_1\cong F_1\cap \mathbb R^2$. In particular, this implies $F_1$ and its corresponding group $\Gamma_1$ in $\Gamma$ are abelian. There is an Euclidean plane (for rotations) or an Euclidean $3$-space (for hyperbolic or parabolic isometries) fixed by $G_1$ which acts discrete on it.
 Hence $G_1$ is $\mathbb Z$, $\mathbb Z\times \mathbb Z$ or $\mathbb Z\times \mathbb Z\times \mathbb Z$. This finishes the proof that closed $\mathbb H^2\times \mathbb E^2$ manifolds admit an $S^1$ action, and thus have vanishing Gromov norm.

\smallskip
\smallskip

\noindent{\bf 2. $\widetilde{SL_2} \times \mathbb E$:}

\smallskip
\smallskip
For $\widetilde{SL_2} \times \mathbb E$ geometry, first notice $\hbox{Isom}(\widetilde{SL_2} \times \mathbb E)=\hbox{Isom}(\widetilde{SL_2})\times \hbox{Isom}(\mathbb E)$. Let us look at the image of $G$ under the projection $\hbox{Isom}(\widetilde{SL_2} \times \mathbb E)\rightarrow \hbox{Isom}(\mathbb E)$. If the kernel $K=G\cap \hbox{Isom}(\widetilde{SL_2})$ is trivial, then the image has to be indiscrete. In this case, $G_1$ is an abelian group. Then we look at the other projection $\hbox{Isom}(\widetilde{SL_2} \times \mathbb E)\rightarrow \hbox{Isom}(\widetilde{SL_2})$. If the kernel is non-trivial, the quotient will be an $S^1$ manifold. Then $G$ is identified with its image under this projection. We further project it under $\hbox{Isom}(\widetilde{SL_2})\rightarrow \hbox{Isom}(\mathbb H^2)$. Again, if the kernel is nontrivial, it is an $S^1$ manifold. Hence we could assume $G$ is identified with its image under the composition of the above two projections. An abelian subgroup of $PSL(2, \mathbb R)= \hbox{Isom}(\mathbb H^2)$ fixes a line $l$. Hence $G_1$ leaves invariant and acts discretely on the $3$-space $l\times \mathbb E^2$. It follows that  
$G_1$ is $\mathbb Z$, $\mathbb Z\times \mathbb Z$ or $\mathbb Z\times \mathbb Z\times \mathbb Z$. 

If the kernel $K$ is nontrivial, it is discrete in $\hbox{Isom}(\widetilde{SL_2})$. There are three cases by the classification of $\widetilde{SL_2}$ geometry.  In the first case, the corresponding $3$-manifold is a line bundle over a non-closed surface. In this scenario, the line bundle structure would be inherited by the $4$-manifold. Hence, the quotient is a non-closed manifold. In the second case, the $3$-manifold is a Seifert fibration. In this situation, the quotient $4$-manifold would also be $S^1$ fibered and thus has Gromov norm zero.

We are left with the case when $K\cap \mathbb E$ is trivial and the image of $K$ under $\hbox{Isom}(\widetilde{SL_2})\rightarrow \hbox{Isom}(\mathbb H^2)$ is indiscrete. In this case, $K$ is $\mathbb Z$, $\mathbb Z\times \mathbb Z$ or the Klein bottle group. Especially, we notice that $G$ has a subgroup $G_1$ of index at most two such that it has discrete abelian normal subgroup $H$ which is isomorphic to $\mathbb Z$ or $\mathbb Z\times \mathbb Z$.

Then let us look at the image of $G$ under the composition of the projections $$\hbox{Isom}(\widetilde{SL_2}\times \mathbb E)\rightarrow \hbox{Isom}(\widetilde{SL_2})\rightarrow \hbox{Isom}(\mathbb H^2).$$ If the kernel is nontrivial, then the quotient $4$-manifold is $S^1$-fibered since the kernel in each step has to be infinite cyclic. Hence we could assume $G$ is identified with its image $\Gamma$ in $PSL(2, \mathbb R)=\hbox{Isom}(\mathbb H^2)$. Since $\Gamma$ is indiscrete and it has nontrivial discrete abelian normal subgroup $H$, $\Gamma$ has a subgroup $\Gamma_1$ of index at most two which centralizes $H$. Hence by Lemma \ref{4.5scott}, $\Gamma_1$ is abelian. Then as we argued in Proposition \ref{H3R}, $G_1$ leaves invariant a $3$-space $l\times \mathbb E^2$. Since this $3$-space is isometric to the Euclidean space and $G_1$ acts discretely on it, it follows $G_1$ is $\mathbb Z$, $\mathbb Z\times \mathbb Z$ or $\mathbb Z\times \mathbb Z\times \mathbb Z$. 

In summary, either our quotient manifold is not closed or it admits nontrivial $S^1$ action which implies Gromov norm $0$.
\end{proof}

\begin{remark}
In the above proof, we obtain more precise description of $4$-manifolds of geometries $\mathbb H^3\times \mathbb E$, $\mathbb H^2\times \mathbb E^2$, and $\widetilde{SL_2} \times \mathbb E$. Especially, we have shown that all such closed $4$-manifolds admit non-trivial $S^1$ action.
\end{remark}

We end up this section by discussing the relations with symplectic structures. First, symplectic $4$-manifolds with $\kappa^s=-\infty$ are rational or ruled surfaces. Hence, $\mathbb P^2(\mathbb C)$, $S^2 \times S^2$, $S^2 \times \mathbb{E}^2$ and $S^2 \times \mathbb{H}^2$  in category $-\infty$ admit symplectic models. Meanwhile, $S^4$, $Sol_0^4$ and $Sol_1^4$ and $S^3\times \mathbb E$ does not admit any symplectic models. Geometries $\mathbb E^4$, $Nil^4$, $Nil^3\times \mathbb E$ and $Sol^3\times \mathbb E$ in category $0$ admit symplectic models. They are realized by $T^2$ bundles over $T^2$ \cite{Gei}. All geometries in category $1$ except $F^4$ also admit symplectic structures. They are realized by surface bundles over torus. Finally, for geometries in category $2$, product of surfaces $\Sigma_g\times \Sigma_h$ has geometry $\mathbb H^2\times \mathbb H^2$, ball quotients have $\mathbb H^2(\mathbb C)$. However, it is conjectured that any closed hyperbolic $4$-manifold would have all Seiberg-Witten invariants vanish, which in particular implies it does not admit symplectic structures. Partial results towards this conjecture were obtained in \cite{Lebhyp}.

As suggested by the discussions in this section, we have the following question.

\begin{question}\label{topKod0norm}
\begin{enumerate}
\item Let $M$ be a smooth $2n$-dimensional complex manifold with nonvanishing Gromov norm. Is $\kappa^h(M)=n$?

\item Let $M$ be a smooth $4$-dimensional symplectic manifold with nonvanishing Gromov norm. Is $\kappa^s(M)=2$?
\end{enumerate}
\end{question}

If $M$ is a K\"ahler surface, the question is positively answered by the result of \cite{PP, PPinv}. Precisely, they showed that $M$ admits an $\mathcal F$-structure if and only if the Kodaira dimension is different from $2$ in \cite{PP}. On the other hand, the existence of $\mathcal F$-structure implies vanishing Gromov norm. Moreover, all known examples of compact complex surfaces which are not of K\"ahler type have $\mathcal F$-structure and thus vanishing Gromov norm. In other words, the complex part of Question \ref{topKod0norm} for complex surfaces is reduced to answer the following: Does every complex surface of Class VII have Gromov norm $0$? 

In general, if the answer is positive for $M$ and $N$, so is the product manifold $M\times N$ since the Kodaira dimension is additive and $$||M||\cdot ||N||\le ||M\times N||\le {\dim M+\dim N \choose \dim M}\cdot ||M||\cdot ||N||.$$

For the symplectic part, $\kappa^s(M)=-\infty$ implies vanishing Gromov norm since all these manifolds are rational or ruled surfaces which have amenable fundamental groups. It is most interesting to know whether $\kappa^s(M)=0$ would imply $||M||=0$. 
 
%It is easy to check that if such an $M$ is K\"ahler, then $\kappa^h(M)\ge 1$.

\subsection{Partial orders defined by non-zero degree maps}
In dimension two, orientable surfaces are ordered by their genus, which is finer than Kodaira dimension. 
This order could also be viewed as introduced by maps between manifolds. More precisely, there is a non-zero 
degree map from $\Sigma_g$ to $\Sigma_h$ if and only if $g\ge h$. 
Thus, we could introduce a partial ordered set (in this case it is totally 
ordered). The elements are surfaces up to homotopy/homeomorphism/diffeomorphism. And we say $M^2$ is larger than $N^2$, or $M^2\succ N^2$, 
if there is a non-zero degree map from $M^2$ to $N^2$. Notice that it does define an order because once $M^2\succ N^2$,
and $N^2\succ M^2$, then $M^2\cong N^2$.

This partial order could be generalized to higher dimensions. It is usually called the Gromov partial order. However, there are several issues. Let us 
first focus on closed orientable manifolds. First of all, 
it is very sensitive to the category of maps we choose. We are interested in continuous or differentiable maps, 
and sometimes may require to preserve the symplectic/complex structures. This is not a problem when the dimension is three. In dimension four, Duan and Wang \cite{DW} show that, when we are working
on continuous maps and topological manifolds, the simply connected $4$-manifolds are ordered by their intersection forms. There are topological $4$-manifolds  admit no smooth structures. However, if we concentrate on smooth manifolds, then it does not really matter if we look at continuous maps or differentiable map. This is because a continuous map between smooth manifolds is homotopic to a differentiable one ({\it c.f.} \cite{BT} Proposition 17.8). At the same time, degree is a homotopy invariant. In other words, the smooth non-zero degree map cannot distinguish the exotic smooth structures. For example, the exotic $7$-spheres and the standard $7$-sphere (smoothly) $1$-dominate each other, since we could find differentiable degree $1$ maps (from either direction) which is homotopic to homeomorphisms.

The second issue is this mapping ``order" is not necessarily a partial order. In other words, if we define that $M$ and $N$ are in the same equivalence class when we have a non-zero degree map from $M$ to $N$ and
a non-zero degree map from $N$ to $M$,  then the equivalence classes are no longer manifolds up to homotopy/homeomorphism/diffeomorphism, 
even in dimension three. For example, $S^3$ and the lens spaces $L(p,q)$ are in the same equivalence class 
because there is a quotient map from $S^3$ to $L(p,q)$, and we know that any three manifold dominates $S^3$. We are interested in determining the manifolds in a given class. Lemma \ref{rolfsen} is useful to deal with this issue.

The last but not the least,  we have to restrict ourselves on irreducible manifolds at least when dimension is four. Let us
take a look at an example of LeBrun \cite{LeBcover}. Suppose $M$ is a (non-spin) complete intersection surface of general type,
and $N=S^2\times S^2/\mathbb Z_2$. We know that $M\#N$ have a degree one map onto $M$ by contracting the $N$ portion
in the direct sum to a point. While on the other hand, the double cover of $M\#N$ is $k\mathbb CP^2\# l \overline{\mathbb CP^2}$.
Another more direct example is taking $M\#\mathbb CP^2=(k-1)\mathbb CP^2\#l\overline{\mathbb CP^2}$ and looking at the degree one map from it to $M$. These examples imply that this order would
not be interesting if we include the reducible 
ones into our objects. It only detects the size of the intersection form. This is not the right order in our mind since $k\mathbb CP^2\# l \overline{\mathbb CP^2}$ should be among the simplest ones by their topological types. Notice that any symplectic $4$-manifolds and complex surfaces are irreducible. 

There are three facts worth noting for the mapping order(s). The first one is that for any manifold $M^n$ there is always a 
continuous non-zero degree map onto $S^n$.  
Thus $S^n$ is 
always the minimal one in the order. A fact related to this is that any symplectic/complex $4$-manifold could be realized
as a symplectic/complex ramified cover of $\mathbb CP^2$. So $\mathbb CP^2$ is the minimal  
manifold in the symplectic/complex order. Another fact is the so called Gromov hyperbolization. It says that for 
any manifold $N$, one can find a hyperbolic manifold $H(N)$, such that it maps onto $N$ through a non-zero degree map. In other words, hyperbolic manifold is large with respect to the order.

Now let us assume our manifolds are smooth. Although the mapping order defined using differentiable maps does not give more information than that given by continuous maps, it is indeed give us richer structures when the map is regarding some geometric structures. In a rougher scale, we expect various Kodaira dimensions are compatible with this order, in the sense that
 if $M \succ N$ in a suitable category, then $\kappa(M)\ge \kappa(N)$ for Kodaira dimension $\kappa$ defined in the same category. The three facts mentioned in the previous paragraph are evidences. 
 We have shown in Theorem \ref{mapKod3} that it is indeed true for $\kappa^t$. It is also true for holomorphic Kodaira dimension (see Theorem \ref{ko-dim}).

We expect  the same conclusion is valid for symplectic $4$-manifolds and $(J,J')$ pseudoholomorphic maps (or 
symplectic maps) between them with respect to symplectic Kodaira dimension. 

\begin{question}\label{sdeg}
Suppose that $(M_1, \omega_1)$ and $(M_2, \omega_2)$ are symplectic $4$-manifolds and almost complex structures $J_i$ are tamed by $\omega_i$. If $f$ is a $(J_1,J_2)$-pseudo-holomorphic map ({\it i.e.} $f\circ J_1=J_2\circ f$) of non-zero degree from $(M_1, \omega_1)$ to $(M_2, \omega_2)$, is $\kappa^s(M_1, \omega_1)\ge \kappa^s(M_2, \omega_2)$?
\end{question}

Recall that an almost complex structure $J$ is {\it tamed} by a symplectic form $\omega$ if $\omega(v, Jv)>0$ for any $v\ne 0$.

We could answer this question positively when $\kappa^s(M_1, \omega_1)=-\infty$. 
\begin{prop}
Under the assumptions of Question \ref{sdeg}, if $\kappa^s(M_1, \omega_1)=-\infty$, then $\kappa^s(M_2, \omega_2)=-\infty$.
\end{prop}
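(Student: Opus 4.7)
The plan is to exploit the abundance of $J_1$-holomorphic spheres on a rational or ruled $4$-manifold, push one forward by $f$ to produce a $J_2$-holomorphic sphere in $M_2$, and then invoke the symplectic classification of Kodaira dimension $-\infty$ to conclude.

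First, by Liu's theorem the hypothesis $\kappa^s(M_1,\omega_1)=-\infty$ forces $(M_1,\omega_1)$ to be a (possibly blown up) rational or ruled symplectic $4$-manifold. By McDuff's analysis of $J$-holomorphic curves on such manifolds, for any $\omega_1$-tame $J_1$ there is an embedded $J_1$-holomorphic sphere through every point of $M_1$ representing the fiber class of the ruling (or the line class in $\mathbb{CP}^2$), with positive $\omega_1$-area.

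Second, since $\deg(f)\ne 0$ the map $f$ is surjective and $df$ has full rank on a dense open set $U\subset M_1$. Pick $q\in U$ and a $J_1$-holomorphic parametrization $u\colon S^2\to M_1$ of such a sphere with $u(z_0)=q$ and $du(z_0)\ne 0$. The pseudo-holomorphicity condition $df\circ J_1=J_2\circ df$ together with the chain rule forces $f\circ u\colon S^2\to M_2$ to be $J_2$-holomorphic, and $f\circ u$ is non-constant because $df_q$ is an isomorphism. Hence its image $D\subset M_2$ is a $J_2$-holomorphic, and therefore $\omega_2$-symplectic, rational curve.

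Third, I would deduce $\kappa^s(M_2,\omega_2)=-\infty$ from the existence of $D$ by invoking the structure theorem that a symplectic $4$-manifold admitting a non-constant $J$-holomorphic sphere for some tame $J$ has $\kappa^s=-\infty$. Concretely, pass to a symplectic minimal model $M_2^{\min}$ and observe that successive blow-downs of exceptional spheres either contract $D$ (impossible since $\omega_2\cdot [D]>0$ is preserved) or weakly raise $[D]^2$, so $D$ descends to a symplectic sphere of non-negative self-intersection in $M_2^{\min}$; McDuff's theorem then identifies $M_2^{\min}$ with $\mathbb{CP}^2$ or an $S^2$-bundle over a surface. The main obstacle is this last step, since controlling the self-intersection and singularities of a possibly multiply covered image $D$ is delicate. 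A cleaner alternative is to appeal to Taubes' $SW=Gr$ machinery: the family of $J_1$-spheres over $U$ pushes forward to a family of $J_2$-spheres covering a dense set of $M_2$, which via Gromov compactness yields a non-zero Gromov invariant in a class $A$ with $\omega_2\cdot A>0$ and $A\ne \pm K_{\omega_2}$, contradicting the Li-Liu characterization of symplectic $4$-manifolds with $\kappa^s\ge 0$.
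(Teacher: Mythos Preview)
Your first two steps match the paper's argument: use that $(M_1,\omega_1)$ is rational/ruled, so for the given $J_1$ there are $J_1$-holomorphic spheres through every point of $M_1$, and push them forward by the $(J_1,J_2)$-holomorphic map $f$ to get $J_2$-holomorphic spheres in $M_2$.

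The divergence is in step three, and there is a genuine gap in your version. The ``structure theorem'' you invoke --- that the mere existence of a non-constant $J$-holomorphic sphere forces $\kappa^s=-\infty$ --- is false as stated: any blow-up (say of a $K3$) carries an exceptional $J$-sphere yet has $\kappa^s\ge 0$. Your attempted repair by passing to a minimal model and tracking $[D]^2$ runs into exactly the obstacle you flag: the image $D$ may be a multiply covered or highly singular rational curve, and blow-downs can contract pieces of it, so there is no clean control on its self-intersection in $M_2^{\min}$. Your $SW=Gr$ alternative is closer in spirit to something that works but, as written, does not actually produce a nonzero Gromov invariant.

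The paper avoids all of this by using not a \emph{single} pushed-forward sphere but the whole covering family. Since the $J_1$-spheres in a fixed class $A$ cover $M_1$ and $f$ is surjective, the compositions $f\circ u$ are $J_2$-spheres in the fixed class $f_*A$ that cover $M_2$ (and $f_*A\ne 0$ because not all of them can be constant). Thus $M_2$ is symplectically uniruled for the given tame $J_2$, and one then appeals to the known equivalence ``uniruled $\Leftrightarrow \kappa^s=-\infty$'' in dimension four. The point is that it is the \emph{covering} property of the family, not the existence of one sphere, that pins down the Kodaira dimension.
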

\begin{proof}
In this situation, $M_1$ could be covered by $J_1$-holomorphic spheres in certain homology class $A$. As $f$ is onto $M_2$, not all these spheres will be contracted under $f$. This implies the class $f_*A$ is non-trivial. After composing  $f:M_1\rightarrow M_2$, any $J_1$-holomorphic sphere $S^2\rightarrow M_1$ in class $A$ will be a $J_2$-holomorphic sphere in class $f_*A$. All these $J_2$-holomorphic spheres will cover $M_2$ since $f$ is surjective. This implies that the Kodaira dimension $\kappa^s(M_2)=-\infty$ as well.
\end{proof}

For the definition of symplectic Kodaira dimension, we need to reduce the calculations to its minimal model. In the above argument, we apply a birational description, uniruledness, of symplectic manifolds with $\kappa^s=-\infty$. It seems that to prove it in general, we will need a birational description, since it is very hard to regard minimality in the maps.

\subsection{Degree $1$ maps}\label{degree1}
The degree 1 maps are self interesting and are studied extensively in the literatures. 

In dimension three, there is a good formulation of degree one maps using surgeries. First, a well known result says
that any $3$-manifold could be constructed from $S^3$ by a ($\pm1$) surgery along a link, each of whose components 
are unknots (or equivalently we could do a sequence of surgeries along unknots). A result of Boileau and Wang shows 
that any surgery along homotopically trivial knot could be realized by a degree one map. Especially, this gives a proof of
the result that $S^3$ is $1$-dominated by any $3$-manifolds. 

Since surgeries on $3$-manifolds correspond to attaching $2$-handles on $4$-manifolds with boundary, it is natural to have a version of Boileau-Wang's result in this situation as well. Notice the degree of a proper map between compact manifolds with boundary is defined by using the relative cohomology $H^n(M,\partial M; \mathbb Z)$, which is isomorphic to $H_0(M; \mathbb Z)=\mathbb Z$ by Lefschetz duality.

\begin{prop}
Suppose $M$ is a compact $4$-manifold with boundary, and $M'$ is the $4$-manifold obtained by attaching an irreducible $2$-handle $H$ along a homotopically trivial knot $k$ on $\partial M$, then there is a degree one map $f:M'\rightarrow M$. The same statement is true when we attach $2$-handles along a link $l$ with each component homotopically trivial.
\end{prop}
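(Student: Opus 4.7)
The plan is to build $f: M' \to M$ as the identity on $M \subset M'$ and to collapse the entire 2-handle $H = D^2 \times D^2$ into the boundary $\partial M$. The dimensional advantage over the 3-dimensional Boileau--Wang setting is that $H$ is contractible while the attaching region $S^1 \times D^2$ only has the homotopy type of the attaching circle $k$. Thus homotopic triviality of $k$ in $\partial M$ alone should let us extend the attaching map across all of $H$ with target still inside the codimension-one subset $\partial M \subset M$.

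Concretely, let $\phi: S^1 \times D^2 \to \partial M$ be the attaching map, with $\phi|_{S^1 \times \{0\}} = k$. Since $S^1 \times D^2$ deformation retracts onto its core $S^1 \times \{0\}$ and $k$ is homotopically trivial in $\partial M$, the map $\phi$ is nullhomotopic as a map into $\partial M$. The inclusion $S^1 \times D^2 \hookrightarrow D^2 \times D^2$ is a cofibration into a contractible space, so by standard obstruction theory $\phi$ extends to a continuous map $\bar\phi: D^2 \times D^2 \to \partial M$; the construction can be arranged so that $\bar\phi$ has image in an arbitrarily small neighborhood in $\partial M$ of a chosen nullhomotopy trace of $k$. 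Define $f: M' \to M$ by $f|_M = \mathrm{id}_M$ and $f|_H = \bar\phi$. These two descriptions agree on the identification locus $\phi(S^1 \times D^2)$, so $f$ is continuous; moreover $\partial M' = (\partial M \setminus \phi(\mathrm{int}(S^1 \times D^2))) \cup (D^2 \times S^1)$ is sent into $\partial M$, so $f$ is a proper map of pairs $(M', \partial M') \to (M, \partial M)$ and the degree is well-defined.

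To see that $\deg f = 1$, I observe that $\bar\phi(H) \subset \partial M$ is at most $3$-dimensional, hence has empty interior in the $4$-manifold $M$. Picking a regular value $p \in \mathrm{int}(M) \setminus \bar\phi(H)$ gives $f^{-1}(p) = \{p\}$ with $f$ equal to the identity near $p$, so the local degree is $+1$ and $\deg f = 1$. The link case is parallel: extend each $\phi_i$ to $\bar\phi_i: D^2 \times D^2 \to \partial M$ with pairwise disjoint images near the corresponding $k_i$, and define $f$ componentwise by the identity on $M$ together with each $\bar\phi_i$ on the corresponding handle.

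The main obstacle is confirming that the extension $\bar\phi$ can genuinely be chosen with image lying in $\partial M$, so that the collapsed handle does not interfere with generic interior points of $M$. This is precisely what the hypothesis that $k$ be homotopically trivial in $\partial M$ supplies; with only nullhomotopy inside $M$, the extension would have to use the $4$-dimensional interior of $M$ and the degree could no longer be read off so cleanly from a generic preimage count.
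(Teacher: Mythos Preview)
Your argument is correct and is somewhat more streamlined than the paper's. The paper follows the Boileau--Wang construction closely: it realises $k$ as the result of crossing changes on an unknot, builds a singular disk $D\subset\partial M$ with $\partial D=k$, takes a regular neighbourhood $N(D)\cong D^4$ of $D$ inside the $4$-manifold $M$, and then sends the handle $H$ into $N(D)$ (after first matching the Boileau--Wang degree-one map on the $3$-dimensional boundary). You instead observe that the attaching map $\phi:S^1\times D^2\to\partial M$ is nullhomotopic because $k$ is, and use the homotopy extension property for the cofibration $S^1\times D^2\hookrightarrow D^2\times D^2$ to extend $\phi$ over the whole handle with image still in $\partial M$; the degree computation then reduces to a single regular interior point. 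What your approach buys is brevity and a clean separation of the topological input (nullhomotopy of $k$ in $\partial M$) from the conclusion. What the paper's more constructive route buys is an explicit description that visibly restricts to the Boileau--Wang map on $\partial M'\to\partial M$, which is convenient for the subsequent discussion of closing up along $3$- and $4$-handles. One minor remark: in the link case you do not actually need the images $\bar\phi_i(H_i)$ to be pairwise disjoint; it suffices that each lies in $\partial M$, since the degree is read off at a single interior point of $M$.
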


\begin{proof}
Let us first recall the settings of Proposition 3.1 in \cite{BW}. Since $k$ is null-homotopic in $\partial M$, $k$ can be obtained from a trivial know $k'$ by finitely many self-crossing changes of $k'$. Let $D'$ be an embedded disk in $M$ bounded by $k'$. A singular disk $D\subset \partial M$ with $\partial D=k$ is obtained by by identification of pairs of arcs in $D'$ following the self-crossing-changes from $k'$ to $k$. The singular disk $D$ obtained in $\partial M$  with $\partial D=k$ has the homotopy type of a graph. Let $N(D)$ be a regular neighborhood of $D$ in $M$. Then $N(D)$ is an irreducible handlebody. i.e. homeomorphic to $D^4=D^2\times D^2$.  We could make a suitable choice of $N(D)$ such that the attaching region $N(k)\subset N(D)$.

Let us construct a degree one map $f:M'=M-N(k)\cup_{\phi} H\rightarrow M=\overline{M-N(k)}\cup N(k)$. where $H=D^2\times D^2$.

First, the map $f$ at part $M-N(k)$ in $M'$ is defined to be identity. For $\partial M'$, it could be viewed as obtained from a surgery along $k$ from $\partial M$ defined by the map $\phi$. Hence we could define $f$ on this part as in \cite{BW}. Especially, we notice that the part of $\partial H=S^1\times D^2\cup D^2\times S^1$ which is not attached to $M$ is mapped to $N(D)$.

Combining what we said on $M-N(k)$ and on $\partial M'$, the whole boundary $\partial H \subset N(D)$. Since $H$ is $D^4$, we can extend the map to whole $M'$ by sending $H=D^4$ into $N(D)$.

Since $M$ is a compact $4$-manifold, $N(D)$ is a proper subset of $M$, we know the degree of $f$ is one.
\end{proof}

The next step is to analyze the case of maps between closed $4$-manifolds. The $3$-handles and $4$-handles attachings are uniquely determined by the $1$-handles and $2$-handles, especially we know that the union of $3$-handles and $4$-handles will be diffeomorphic to the boundary sum of $m$ $S^1\times D^3$. In particular, we know the $2$-handlebody $X_2$ has boundary $\#_mS^1\times S^2$. Back to $M'$ and $M$ in previous proposition, we have already established the map between the corresponding $2$-handlebodies $M'$ and $M$. If there are no $3$-handles to be attached for both, i.e. $\partial M'=\partial M=S^3$, then the degree one map could be extended to the unique closed-ups of $M'$ and $M$.

The relations of degree $1$ maps with symplectic birational geometry will be discussed in Section \ref{degreecomplex}.

\section{Higher Dimensional Kodaira dimensions and equivalence classes of mapping order}

\subsection{The mapping order for complex and almost complex manifolds}\label{degreecomplex}
For complex surfaces, the Kodaira dimension behaves just as we expect, {\it i.e.} it regards the (meromorphic) mapping order and preserved by covering. Namely, we have the following (see \cite{Ue}).

\begin{theorem}\label{ko-dim}
\begin{itemize}
\item Let $f: M\rightarrow N$ be a generically surjective meromorphic mapping of complex manifolds such that $\dim M=\dim N$. Then we have $\kappa^h(M)\ge \kappa^h(N)$. 

\item Let $f: M\rightarrow N$ be a finite unramified covering of complex manifolds. Then we have $\kappa^h(M)=\kappa^h(N)$.
\end{itemize}
\end{theorem}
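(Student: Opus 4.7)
The plan is to handle both parts by the same mechanism---pullback of pluricanonical sections---after reducing to the holomorphic, generically finite case.

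For part (1), I would first resolve the indeterminacy of $f$ via Hironaka to obtain a bimeromorphic modification $\pi \colon \tilde M \to M$ such that $\tilde f := f \circ \pi \colon \tilde M \to N$ is everywhere holomorphic. Since plurigenera are bimeromorphic invariants of smooth complex manifolds, $P_l(\tilde M) = P_l(M)$ and $\kappa^h(\tilde M) = \kappa^h(M)$. Because $\dim \tilde M = \dim N$ and $\tilde f$ is surjective, $\tilde f$ is generically finite, and the standard adjunction gives $K_{\tilde M} = \tilde f^{*} K_N + R$ with an effective ramification divisor $R \geq 0$. For any $\sigma \in H^0(N, K_N^{\otimes l})$, the pullback $\tilde f^{*}\sigma$ is nonzero (since $\tilde f$ is dominant) and lies in $H^0(\tilde M, \tilde f^{*}K_N^{\otimes l})$, which injects into $H^0(\tilde M, K_{\tilde M}^{\otimes l})$ via the effective divisor $lR$. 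This yields $P_l(N) \leq P_l(\tilde M) = P_l(M)$ for all $l \geq 1$, hence $\kappa^h(M) \geq \kappa^h(N)$.

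For part (2), the inequality $\kappa^h(M) \geq \kappa^h(N)$ follows immediately from part (1) (there is no indeterminacy to resolve and $R = 0$). For the reverse, I would use that an unramified cover satisfies $K_M = f^{*} K_N$ exactly and push forward through the finite étale map:
\[
H^0(M, K_M^{\otimes l}) \;\cong\; H^0\bigl(N, K_N^{\otimes l} \otimes f_{*}\mathcal{O}_M\bigr),
\]
where $f_{*}\mathcal{O}_M$ is locally free of rank $d := \deg f$. Passing to a Galois closure and decomposing $f_{*}\mathcal{O}_M$ into isotypic components for the deck transformation group, each summand is a flat (hence torsion) line bundle $\mathcal{L}_\chi$, and $K_N^{\otimes l} \otimes \mathcal{L}_\chi$ has the same Iitaka dimension as $K_N^{\otimes l}$. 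Summing over characters bounds $P_l(M)$ by at most $d$ times the growth of $P_l(N)$, forcing $\kappa^h(M) \leq \kappa^h(N)$.

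I expect the main obstacle to be the reverse direction in (2): once Hironaka is invoked in (1), the section-pullback argument is essentially formal and symmetric in nature, but obtaining $\kappa^h(M) \leq \kappa^h(N)$ requires showing that the extra isotypic summands in $f_{*}\mathcal{O}_M$ do not enlarge the Iitaka dimension, which is where the torsion-line-bundle reduction does the real work. The loss of a multiplicative constant $d$ in the comparison of $P_l$'s is harmless, since $\kappa^h$ depends only on the polynomial growth rate $l^{\kappa}$ and is insensitive to overall scalar factors.
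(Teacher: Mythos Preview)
The paper does not prove this statement; it is quoted as a known fact with a reference to Ueno's monograph \cite{Ue}. So there is no in-paper argument to compare against, and your proposal must be judged on its own.

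Your argument for part (1) is the standard one and is correct: resolve indeterminacy by Hironaka, use bimeromorphic invariance of plurigenera, and pull back pluricanonical forms through the effective ramification divisor.

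In part (2), the forward inequality follows from (1) as you say, but the reverse inequality has a genuine gap. After passing to a Galois closure $\pi\colon\tilde M\to N$ with deck group $G$, you assert that the isotypic pieces of $\pi_*\mathcal{O}_{\tilde M}$ are flat torsion \emph{line bundles} $\mathcal{L}_\chi$. This holds only when $G$ is \emph{abelian}. For non-abelian $G$ the irreducible representations $\rho$ may have $\dim\rho>1$, and the corresponding summands are flat vector bundles of rank $\dim\rho$; the torsion trick $\mathcal{L}_\chi^{|G|}\cong\mathcal{O}_N$ has no direct analogue for these. Trying to trivialize such an $E_\rho$ by pulling back to a further \'etale cover brings you back to $\tilde M$ itself, and the bound becomes circular. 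Since not every finite group is solvable, you cannot in general filter the cover by abelian steps either.

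A clean, non-circular repair is to abandon the dimension count on individual $P_l$ and argue via the canonical ring. With $\pi\colon\tilde M\to N$ Galois \'etale of group $G$ one has $K_{\tilde M}=\pi^*K_N$, so $G$-invariant sections descend and
\[
R(N,K_N)\;=\;R(\tilde M,K_{\tilde M})^{G},\qquad R(X,K_X):=\bigoplus_{l\ge 0}H^0(X,K_X^{\otimes l}).
\]
As $G$ is finite, $R(\tilde M,K_{\tilde M})$ is integral over its invariant subring (each $s$ satisfies $\prod_{g\in G}(X-g\cdot s)=0$), hence the two graded domains have equal transcendence degree over $\mathbb C$, giving $\kappa^h(\tilde M)=\kappa^h(N)$. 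Combining with $\kappa^h(\tilde M)\ge\kappa^h(M)\ge\kappa^h(N)$ from part (1) yields the desired equality. This is essentially how Ueno proceeds.
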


The complex projective space $\mathbb CP^n$ is smallest with respect to this mapping order, in the sense that when $M=\mathbb CP^n$,  $N$ has
to be $\mathbb CP^n$ as well.

\begin{example}\label{exotic}
Let $M_1$ be the algebraic surface homeomorphic but not diffeomorphic to $\mathbb CP^2\# 5\overline{\mathbb CP^2}$ constructed
in \cite{PPS}. We know that there are differentiable degree one maps from each direction because we can homotope the homeomorphism from both directions. Theorem
\ref{ko-dim} tells us that there is no non-trivial holomorphic map $f:\mathbb CP^2\# k\overline{\mathbb CP^2}\rightarrow M_1$. However, we have $f:M_1\# k\overline{\mathbb CP^2}\rightarrow 
\mathbb CP^2\# 5\overline{\mathbb CP^2}$ with $k\ge 5$. There is no such map for $k<5$ since they are simply connected which is ordered by their intersection forms by \cite{DW}.

\end{example}

Notice that Theorem \ref{ko-dim} is related to (and could be viewed as $0$-dimensional generalization of) the Iitaka conjecture, which states
that a fiber space $f:X\rightarrow Z$ satisfies $\kappa^h(X)\ge \kappa^h(Z)+\kappa^h(F)$ where $F$ is a general fiber of $f$. Here,
an (analytic) fiber space is a proper surjective morphism with connected fibres. 
Actually, the Iitaka conjecture is one of the main motivations for our additivity principle of Kodaira dimensions.

Furthermore, the mapping order also regards other invariants. Recall that  the algebraic dimension $a(M)$ of a complex manifold is defined as the transcendence degree over $\mathbb C$ of the field $\mathbb C^{Mer}(M)$ of meromorphic functions. 
When $f: M \rightarrow N$ is a surjective holomorphic map,  the algebraic dimensions  $a(M)=a(N)$ (see \cite{Ue}).

In the almost complex setting, it is worth noting that a $(J,J')$ holomorphic map makes the $J$-anti-invariant cohomology
dimension $h_J^-$, which is introduced in \cite{LZ07, DLZ09}, non-decreasing. Let $(M^{2n}, J)$ be an almost complex manifold. The almost complex
structure acts on the bundle of real 2-forms $\Lambda^2$ as an
involution, by $\alpha(\cdot, \cdot) \rightarrow \alpha(J\cdot,
J\cdot)$. This involution induces the splitting into $J$-invariant, respectively,
$J$-anti-invariant 2-forms
%\begin{equation} \label{formtype}
$$\Lambda^2=\Lambda_J^+\oplus \Lambda_J^-.$$
%\end{equation}
We denote by $\Omega^2$ the space of 2-forms on $M$
($C^{\infty}$-sections of the bundle $\Lambda^2$), $\Omega_J^+$ the
space of $J$-invariant 2-forms, {\it etc.} Let also $ \mathcal Z^2$ denote
the space of closed 2-forms on $M$ and let $\mathcal Z_J^{\pm} =
\mathcal Z^2 \cap \Omega_J^{\pm}$.
Then 
%\begin{equation}
$$H_J^{\pm}(M)=\{ \mathfrak{a} \in H^2(M;\mathbb R) | \exists \;
\alpha\in \mathcal Z_J^{\pm} \mbox{ such that } [\alpha] =
\mathfrak{a} \}.$$
%\end{equation}
%\end{definition}
The dimensions $\dim H_J^{\pm}(M)$ is denoted as $h_J^{\pm}(M)$.

\begin{prop}\label{hJ-}For a general non-zero degree map $f: M \rightarrow N$, 
$b_2^{\pm}(M)\ge b_2^{\pm}(N)$.
If there is a surjective equi-dimensional $(J,J')$ holomorphic map $f: M \rightarrow N$ for two almost complex manifolds $(M,J)$ and $(N,J')$, then 
$h_J^{\pm}\ge h_{J'}^{\pm}$. 
\end{prop}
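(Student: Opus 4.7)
The plan is to base both statements on one simple mechanism: for a non-zero degree map $f\colon M\to N$ the pullback $f^{*}\colon H^{2}(N;\mathbb R)\to H^{2}(M;\mathbb R)$ is injective, because the projection formula gives $f_{*}\circ f^{*}=\deg(f)\cdot\mathrm{id}$ on cohomology, so any class killed by $f^{*}$ must vanish. After establishing this, I only need to check how $f^{*}$ interacts with the extra structure on $H^{2}(N)$ in each of the two cases.

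For the first assertion, the identity
\[
\langle f^{*}\alpha\cup f^{*}\beta,[M]\rangle=\deg(f)\,\langle\alpha\cup\beta,[N]\rangle
\]
shows that the intersection form $Q_{M}$ restricted to the $b_{2}(N)$-dimensional subspace $f^{*}H^{2}(N;\mathbb R)\subset H^{2}(M;\mathbb R)$ is $\deg(f)$ times $Q_{N}$. Reversing the orientation of $M$ when $\deg(f)<0$ (which swaps $b_{2}^{+}(M)$ and $b_{2}^{-}(M)$ simultaneously) reduces to the case $\deg(f)>0$, and then a maximal positive-definite, respectively negative-definite, subspace for $Q_{N}$ pulls back injectively to a positive, respectively negative, subspace for $Q_{M}$, giving $b_{2}^{\pm}(M)\ge b_{2}^{\pm}(N)$.

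For the second assertion, the crucial pointwise identity is
\[
(f^{*}\alpha)(JX,JY)=\alpha(f_{*}JX,f_{*}JY)=\alpha(J'f_{*}X,J'f_{*}Y),
\]
which uses $f_{*}\circ J=J'\circ f_{*}$. Consequently $f^{*}$ sends $J'$-invariant forms to $J$-invariant ones and $J'$-anti-invariant forms to $J$-anti-invariant ones; combined with $d\circ f^{*}=f^{*}\circ d$ this yields $f^{*}(\mathcal Z_{J'}^{\pm})\subset\mathcal Z_{J}^{\pm}$, so $f^{*}$ descends to a map $H_{J'}^{\pm}\to H_{J}^{\pm}$. The inequality $h_{J}^{\pm}\ge h_{J'}^{\pm}$ then follows once this descended map is injective.

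The one non-formal input I will need is that a surjective equi-dimensional $(J,J')$-holomorphic map has non-zero degree; this is the only step I expect to require more than naturality. Since $df$ is complex linear on each tangent space, the local Jacobian is non-negative and strictly positive wherever $df$ is an isomorphism, so Sard's theorem together with surjectivity produces a regular value with non-empty preimage and hence $\deg(f)\ge 1$. At that point the ambient $f^{*}$ on $H^{2}(\,\cdot\,;\mathbb R)$ is injective, its restriction to $H_{J'}^{\pm}$ remains injective, and the proof concludes.
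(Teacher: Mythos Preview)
Your argument follows the same line as the paper's: pull back forms, use $f_{*}\circ J=J'\circ f_{*}$ to show $f^{*}$ preserves the $(\pm)$-type, use $d\circ f^{*}=f^{*}\circ d$ to land in $\mathcal Z_J^{\pm}$, and use non-zero degree to get injectivity on $H^{2}$. You are in fact more thorough than the paper, which treats only $h_J^{-}$, defers the $b_2^{\pm}$ statement to ``well-known'', and silently assumes non-zero degree rather than deriving it from surjectivity; your Sard-plus-nonnegative-Jacobian argument is the right justification for that step, and your use of $f_{*}\circ f^{*}=\deg(f)\cdot\mathrm{id}$ is cleaner than the paper's appeal to $(f^{*}\alpha)^{n}$.

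One small caveat on the first assertion: reversing the orientation of $M$ when $\deg(f)<0$ makes the degree positive but then yields $b_2^{\mp}(M)\ge b_2^{\pm}(N)$, not the inequality as stated; indeed the stated inequality can fail for negative degree (take the identity $\overline{\mathbb{CP}^2}\to\mathbb{CP}^2$). The paper does not address this, and since in the $(J,J')$-holomorphic case the degree is automatically positive, it does not affect the main conclusion about $h_J^{\pm}$.
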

\begin{proof}

We only show it for $h_J^-$. The argument for $h_J^+$ is similar and the conclusion for Betti numbers are well-known. Recall when $\alpha$ is a $J'$-anti-invariant two form on $N$, $$\alpha(J'X,J'Y)=-\alpha(X,Y).$$

Now $f^*\alpha$ is a two form on $M$, and $$f^*\alpha(JX, JY)=\alpha(J'f_*X, J'f_*Y)=-\alpha(f_*X, f_*Y)=-f^*\alpha(X, Y).$$

Because $f$ has non-zero degree, $f^*\alpha$ is non-trivial and $$(f^*\alpha)^n=\deg(f)\cdot \alpha^n.$$

Finally, $f^*$ commutes with the differential $d$. Hence $h_J^-\ge h_{J'}^-$.
\end{proof}

There is no such example with $h_J^->h_{J'}^-$ coming into the author's mind when $f: (M, J) \rightarrow (N, J')$ is a  surjective equi-dimensional $(J,J')$ holomorphic map.

Now, let us look at degree $1$ maps. First, since all birational maps are of degree $1$, it is of interests to understand the relation of it with symplectic birational geometry (see \cite{LR}). However, a plain differentiable degree $1$ map will not preserve the birational class starting from dimension $4$. 
Boileau and Wang \cite{BW} proves that
any $3$-manifold $M$ is 1-dominated by a hyperbolic manifolds which is meanwhile a surface bundle $H(M)$. Thus $H(M)
\times S^1$ again 1-dominates $M \times S^1$. Once $M$ is a surface bundle, both could be endowed with a symplectic structure. Hence, a degree one map could change (complex/symplectic) Kodaira dimension
of manifolds, thus the symplectic birational equivalence class. Symplectic fiber sum construction provides more such examples. Moreover, Example \ref{exotic} provides a simply connected example. Hence, one has to impose more conditions on the map in addition to its degree in order to preserve the birational equivalence. A natural question (as mentioned to the author by Tian-Jun Li) is

\begin{question}\label{JJ'holomorphic}
Suppose $f: M_1\rightarrow M_2$ is a $(J_1, J_2)$ pseudo holomorphic map of degree $1$, where $J_1$ and $J_2$ are almost complex structures tamed by symplectic structures $\omega_1$ and $\omega_2$. Is the map a composition of blow downs?
\end{question}
Blow-downs compatible with $J_i$ are very rigid objects. This  excludes a lot of possibilities for the target (see \cite{DLZQJM}).

We could show that the map is birational if it is a holomorphic map. Let $f: M\rightarrow N$ be a degree one holomorphic map, then except possibly
the set $D$ where $\det(\frac{\partial z_i}{\partial w_j})=0$, other parts are $1:1$. Here $z_i$ and $w_j$ are homomorphic local coordinates of $N$ and $M$ respectively. 
This zero locus $D$ is a complex subvariety of complex codimension one in $M$. Thus $M$ and $N$ are birational. When both $M$ and $N$ are projective varieties, the birational morphism is factored as several blowdowns. 

For general almost complex structures, 
it is apparent that the Jacobi matrix $Df(x)$ of $f$ in any point $x\in M_1$ is positive definite. Moreover, the degree is calculated locally as the sum of the signs of the determinant of Jacobian at each preimage. %we have the following Zariski main theorem:
%\begin{lemma}
Hence if $f: M_1\rightarrow M_2$ is a finite ({\it i.e.} the preimage of any point is a finite set) $(J_1, J_2)$ pseudo holomorphic map of degree $1$, then $f$ is a diffeomorphism.
%\end{lemma}

More generally, we would like to know whether it is true that any non-zero degree holomorphic map is homotopic to a composition of blow-downs and branched 
covering? One evidence is that every algebraic surface could be realized as a branched covering of $\mathbb CP^2$.
Another notable fact is that for any equi-dimensional dominating morphism $f: M\rightarrow N$,  we have the ramification
formula $K_M=f^*K_N+R_f$, where the effective $\mathbb Q$-divisor $R_f\ge 0$ is called the ramification divisor. We expect the ramification formula would still hold for $(J, J')$-pseudoholomorphic maps. This  would help us to understand Question \ref{sdeg}. 

Finally, it is amusing to look at the case when the degree is zero. Let us first suppose the complex
dimension is one, then the Liouville's theorem tells us that any such map is a constant map, i.e. it maps onto a point in 
$\mathbb CP^1$. This statement could be generalized to any genus target. This is because any non-compact Riemannian surface is
Stein and further we know any Stein manifold could be biholomorphically embedded into $\mathbb C^N$. For higher dimensions,  if $f:M\rightarrow N$ is of degree zero, then $f$ 
maps into a proper subvariety of $N$.

\subsection{Higher dimensional Kodaira dimensions and additivity}
For higher (even) dimensions, we still have the definition of Kodaira dimensions for complex manifolds. However, it is not known if there is a suitable generalization of $4$-dimensional symplectic Kodaira dimension. One difference between dimensions no more than four and higher is that for dimension larger than $4$ the Kodaira dimension relies not only the smooth structures on the manifold, but also the complex structure on it. 

The following example is due to R\u{a}sdeaconu, but may not be that well-known. Hence we reproduce it here for the convenience of readers. Let us take $M=\mathbb CP^2\# 8\overline{\mathbb CP^2}$ and $N$ is the Barlow surface \cite{Bar} which is a complex surface of general type homeomorphic to $M$. One can also take $M=\mathbb CP^2\# 5\overline{\mathbb CP^2}$ and $N$ is the complex surface of general type homeomorphic to $M$ constructed in \cite{PPS}. Then $M \times \Sigma_g$ is diffeomorphic to $N \times \Sigma_g$. This is because that, at first, they are $h-$cobordant because $M$ and $N$ are so. Second, they are $s-$cobordant because the Whitehead group $Wh(M\times \Sigma_g)=Wh(N\times \Sigma_g)=Wh(\Sigma_g)=0$. Then by the $s-$cobordism theorem proved independently by Mazur, Stallings, and Barden, they are diffeomorphic to each other. On the other hand, they are not the same as complex manifolds since they have distinct Kodaira dimensions. The Kodaira dimension $\kappa^h(M\times \Sigma_g)=-\infty$ as $\kappa^h(M)=-\infty$. However, $\kappa^h(N\times \Sigma_g)=2$ when $g=1$ and $\kappa^h(N\times \Sigma_g)=3$ when $g>1$. In \cite{R}, there are various examples of diffeomorphic manifolds with different Kodaira dimensions constructed.  It is worth noting that none of them is simply connected.

In \cite{LR}, Li and Ruan propose a possible way to define symplectic Kodaira dimension in dimension $6$. First let us recall the following definition.

\begin{definition}
A symplectic $6$-manifold is minimal if it does not contain any rigid stable uniruled divisor.
\end{definition}

Here a (symplectic) uniruled divisor is nothing but a rational or ruled $4$-manifold. A uniruled divisor is {\it stable} if one of its uniruled classes $A$ has a nontrivial Gromov-Witten invariant of the ambient manifold with $K_{\omega}(A)\le -1$. A uniruled divisor is {\it rigid} if none of its uniruled class is uniruled in the ambient manifold.

This definition only takes care about the divisorial contraction. However, for algebraic $3$-folds, flip or small contraction cannot happen in smooth category.

Assume $(M, \omega)$ is a minimal symplectic manifold of dimension $6$, then Li and Ruan propose the following definition of simplectic Kodaira dimension:

\begin{equation}\label{liruan}
\kappa^s(M,\omega)=\left\{\begin{array}{cc}
-\infty & \hbox{if one of $K_{\omega}^i\cdot [\omega]^{3-i}<0$},\\
k& \hbox{ if $K_{\omega}^i\cdot [\omega]^{3-i}=0$ for $i> k$ and $K_{\omega}^i\cdot [\omega]^{3-i}>0$ for $i\le k$}.\\
\end{array}\right.
\end{equation}
 
There is an issue of well definedness. For example, one cannot yet exclude the possibility of a minimal symplectic $6$-manifold $(M, \omega)$ with \begin{equation}\label{bad}K_{\omega}\cdot [\omega]^2=0,K_{\omega}^2\cdot [\omega]=0\  \hbox{but}\  K_{\omega}^3>0,\end{equation}
although there are no counterexample in the author's sight as well.

In dimension $4$, a similar issue is resolved only with the help of Seiberg-Witten invariant which has no counterpart in higher dimensions.

For product symplectic manifolds of type $M^4\times \Sigma_g$ with $g\ge 0$ and product symplectic forms, we can verify that the proposed definition is good in the sense that no bad cases like \eqref{bad} would happen. It also satisfies the additivity and is compatible with complex Kodaira dimension. 

\begin{prop}\label{proposed} Suppose $(M^4, \omega_M)$ is a symplectic $4$-manifold and $(M^4\times \Sigma_g, \omega_M \times \omega_g)$ is minimal, then the following additivity relation holds $$\kappa^s(M^4\times \Sigma_g, \omega_M \times \omega_g)=\kappa^s(M^4)+\kappa^s(\Sigma_g).$$

Moreover, when $M^4$ admits a complex structure $J$, $$\kappa^s(M^4\times \Sigma_g, \omega_M \times \omega_g)=\kappa^h(M^4\times \Sigma_g, J\times j).$$

\end{prop}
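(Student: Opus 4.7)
The plan is to apply the Li--Ruan formula \eqref{liruan} to the product symplectic form and extract the Kodaira dimension from a direct sign analysis of the four triple intersection numbers. By the Künneth formula, $K_\omega = \pi_1^*K_{\omega_M} + \pi_2^*K_{\omega_g}$ and $[\omega]=\pi_1^*[\omega_M]+\pi_2^*[\omega_g]$. Setting $A:=K_{\omega_M}\cdot[\omega_M]$, $B:=[\omega_M]^2>0$, $C:=K_{\omega_M}^2$, $a:=\int_{\Sigma_g}\omega_g>0$, and $\chi_g:=2g-2$, the vanishings $H^{k}(M^4)=0$ for $k\geq 5$ and $H^k(\Sigma_g)=0$ for $k\geq 3$ leave only cross terms in the cup products, giving
\[
[\omega]^3 = 3Ba,\quad K_\omega\cdot[\omega]^2 = 2Aa+B\chi_g,\quad K_\omega^2\cdot[\omega]=Ca+2A\chi_g,\quad K_\omega^3 = 3C\chi_g.
\]
A preliminary observation is that the assumed 6-dimensional minimality of $M\times\Sigma_g$ forces $M$ itself to be 4-dimensionally minimal: any $(-1)$-sphere $E\subset M$ would produce a rigid stable uniruled divisor $E\times\Sigma_g$ of the product. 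Hence $\kappa^s(M)$ is determined by the signs of $A$ and $C$ via Definition \ref{sym Kod'}, and $\kappa^s(\Sigma_g)$ by the sign of $\chi_g$.

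The additivity is then a finite sign check on $(A,C,\chi_g)$. When $\kappa^s(M)\in\{0,1,2\}$ one has $A,C\geq 0$, and the four formulas above immediately reproduce $\kappa^s(M)+\kappa^s(\Sigma_g)$ in each of the nine sub-cases. The pathology \eqref{bad} is ruled out in this regime because $K_\omega^3>0$ forces $C>0$ and $\chi_g>0$, which with $A\geq 0$ already gives $K_\omega^2\cdot[\omega]>0$.

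The main obstacle is the case $\kappa^s(M)=-\infty$ with $g\geq 2$, $A<0$ and $C>0$ (typified by $M=\mathbb CP^2$), where $K_\omega^3>0$ and the signs of $K_\omega\cdot[\omega]^2$, $K_\omega^2\cdot[\omega]$ depend genuinely on $a$. Here the rescue is the light cone lemma applied on $M$ to the pair $-K_{\omega_M},\,[\omega_M]$, both of which have positive square and positive pairing under the current hypotheses: $A^2=(-K_{\omega_M}\cdot[\omega_M])^2\geq K_{\omega_M}^2\cdot[\omega_M]^2=BC$. Combined with $BC>0$ this upgrades to the strict inequality $4A^2>BC$, so the zero $a=B\chi_g/(2|A|)$ of the decreasing affine function $K_\omega\cdot[\omega]^2$ is strictly smaller than the zero $a=2|A|\chi_g/C$ of the increasing affine function $K_\omega^2\cdot[\omega]$. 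Consequently the two functions are never simultaneously nonnegative and never simultaneously zero for $a>0$, so at least one of them is strictly negative for every $a$; this gives $\kappa^s(M\times\Sigma_g)=-\infty$ and also excludes \eqref{bad}. The remaining sub-cases of $\kappa^s(M)=-\infty$ (namely $C\leq 0$, or $g\in\{0,1\}$) are immediate, since one of the four intersections is already negative.

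For the second assertion, chain four identifications: the additivity just proved, the DZ compatibility $\kappa^s(M^4,\omega_M)=\kappa^h(M^4,J)$ from \cite{DZ}, the tautology $\kappa^s(\Sigma_g)=\kappa^h(\Sigma_g,j)$ in complex dimension one, and the additivity $\kappa^h(M\times\Sigma_g,J\times j)=\kappa^h(M,J)+\kappa^h(\Sigma_g,j)$ of the holomorphic Kodaira dimension over products of compact complex manifolds (the straightforward special case of Iitaka's conjecture for $C^\infty$-trivial fibrations). Composing these yields $\kappa^s(M\times\Sigma_g,\omega_M\times\omega_g)=\kappa^h(M\times\Sigma_g,J\times j)$.
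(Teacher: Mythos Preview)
Your argument is correct and follows the same overall architecture as the paper's proof: reduce to minimal $M^4$ via the divisor $E\times\Sigma_g$, expand the triple intersections through K\"unneth, and run a sign analysis case-by-case in $(\kappa^s(M),g)$. The second assertion is handled identically to the paper, by chaining $4$-dimensional compatibility \cite{DZ} with additivity of $\kappa^h$ on products.

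The one substantive difference is your treatment of the hard sub-case $\kappa^s(M)=-\infty$, $g\geq 2$, $C=K_{\omega_M}^2>0$. The paper derives from simultaneous nonnegativity of $K^2\cdot[\omega]$ and $K\cdot[\omega]^2$ the necessary inequality $K_{\omega_M}^2\,[\omega_M]^2\geq 4(K_{\omega_M}\cdot[\omega_M])^2$, and then rules it out by enumerating the finitely many (possibly non-minimal) rational or ruled surfaces with $K_M^2>0$, namely $\mathbb{CP}^2\# k\overline{\mathbb{CP}^2}$ for $k<9$ and $S^2\times S^2$, checking the inequality fails on each. You instead invoke the reverse Cauchy--Schwarz (light cone) inequality on $H^2(M;\mathbb R)$ for the pair $-K_{\omega_M}$, $[\omega_M]$ to obtain $A^2\geq BC$ directly, whence $4A^2>BC$ and the two affine functions of $a$ cannot be simultaneously nonnegative. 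This is cleaner and avoids the case check; it is worth making explicit that the light cone lemma applies because $\kappa^s(M)=-\infty$ forces $M$ to be rational or ruled \cite{Liu}, hence $b^+(M)=1$. With that caveat noted, your route is a genuine simplification of the paper's verification in this sub-case, and actually yields the paper's stronger claim (valid for non-minimal rational or ruled $M$ as well), since the Hodge-index inequality holds on any $b^+=1$ manifold.
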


\begin{proof}

First notice that minimality of a $4$-dimensional symplectic manifold only depends on the diffeomorphism type. So we will say $M^4$ is minimal instead of saying $(M^4, \omega_M)$ is so.

If $M^4$ is not minimal with $E$ as an exceptional curve, then $D=E\times \Sigma_g$ is a rigid stable uniruled divisor. Here $A=[E]$ is a uniruled class in $D$ and $K_{\omega_M\times \omega_g}(A)=-1$.

Hence $M^4$ is minimal. Notice that the canonical class $K=K_{M\times \Sigma_g}=K_M+K_{\Sigma_g}$ and $[\omega]=[\omega_M]+[\omega_g]$.  We calculate  $$K^3=K_M^2\cdot K_{\Sigma_g},$$ 
$$K^2 \cdot [\omega]=
K^2_M \cdot [\omega_g]+2K_M\cdot [\omega_M]\cdot K_{\Sigma_g},$$
$$K \cdot [\omega]^2=
[\omega_M]^2 \cdot K_{\Sigma_g}+ 2[\omega_M]\cdot K_M\cdot [\omega_g].$$

If $g=0$, then we need to prove $\kappa^s(M^4\times S^2, \omega_M \times \omega_0)=-\infty$, or one of the products $K\cdot [\omega]^2,K^2\cdot [\omega],  K^3$ is negative. Notice $K_{S^2}=-2<0$. Both $K^3\ge 0, K^2\cdot [\omega]\ge 0$ would imply $K^2_M\le 0, K_M\cdot [\omega_M]\le 0$. However, this would imply $K\cdot [\omega]^2<0$.

If $g=1$, then $K_{\Sigma_1}=0$ and  $K^3=0$. And the signs of $K^2_{\omega}\cdot [\omega]$ and $K\cdot [\omega]^2$ are determined by that of $K^2(M)$ and $K_M\cdot [\omega_M]$ respectively. Hence $\kappa^s(M^4\times \Sigma_g, \omega_M \times \omega_g)=\kappa^s(M^4)$.

If $g\ge 2$, then $K_{\Sigma_g}>0$ and $\kappa^s(\Sigma_g)=1$. If $\kappa^s(M)\ge 0$, then $K\cdot [\omega]^2>0$, $K^3\ge 0, K^2\cdot [\omega]\ge 0$. Furthermore, $K^3=0$ if and only if $K^2_M=0$, i.e. $\kappa^s(M)=0$ or $1$. And in addition $K^2\cdot [\omega]= 0$ if and only if $\kappa^s(M)=0$. This verifies $\kappa^s(M^4\times \Sigma_g, \omega_M \times \omega_g)=\kappa^s(M^4)+1$ when  $\kappa^s(M)\ge 0$.

When $\kappa^s(M)=-\infty$, we want to show one of the product $K_{\omega}\cdot [\omega]^2,K_{\omega}^2\cdot [\omega],  K_{\omega}^3$ is negative. Actually, we will show it is always true for any (possibly non-minimal) rational or ruled $4$-manifold. If $K^2_M<0$, then $K^3<0$. If $K^2_M=0$ and $K_M\cdot [\omega_M]<0$, then $K^2\cdot [\omega]<0$. Hence we could assume $K^2_M> 0$ and $K_M\cdot [\omega_M]<0$. The only possibilities are $M=\mathbb CP^2\# k\overline{\mathbb CP^2}$ when $k<9$ and $S^2\times S^2$.  For all these cases, if we let $K_M=3H+\sum_i E_i$ for $\mathbb CP^2\# k\overline{\mathbb CP^2}$ and $K_M=-2S_1-2S_2$ for $S^2\times S^2$, then $\omega$ can be written as $aH-\sum_i b_iE_i$ and $aS_1+bS_2$ with all the coefficient positive.
If both $K^2 \cdot [\omega]$ and $K \cdot [\omega]^2$ are non-negative, then we have $$K_{M}^2\cdot [\omega_M]^2\geq 4(K_{M} \cdot [\omega_M])^2$$ from our formula for $K^2 \cdot [\omega]$ and $K \cdot [\omega]^2$.
It is straightforward to check that the inequality is impossible for all the above cases, i.e. one of $K^2 \cdot [\omega]$ and $K \cdot [\omega]^2$ has to be negative. This completes the proof of our first statement.

The second statement follows from the facts that $\kappa^s(M^4)=\kappa^h(M^4)$ and that the complex Kodaira dimension is additive for the product complex structure, i.e. $\kappa^h(M\times \Sigma_g, J\times j)=\kappa^h(M, J)+\kappa^h(\Sigma_g, j)$.
\end{proof}

\subsection{Equivalence classes, Entropy and Gromov norm}
In last section, we mention that the order defined  through non-zero degree maps may not be a partial order. This raises a question that what is each equivalent class like. Precisely, for which manifolds $M$ and $N$, we could have non-zero degree maps $f: M\rightarrow N$
and $g: N\rightarrow M$? Especially, can we find some invariants for each class?

There is another mapping order, defined in a similar manner but through degree one maps (instead of general non-zero degree maps). In dimension three, the degree one mapping order is indeed a partial order \cite{Rong}, 
{\it i.e.} each equivalence class contains exactly one manifold. This is because $3$-manifolds are almost determined by their fundamental groups and
the fundamental groups of $3$-manifolds are residually finite and thus are Hopfian. In other words, it is more or less a group theory reasoning. However, when dimension is getting 
higher,  it does not give rise a partial order when we identify two manifolds if they are homeomorphic. This is because there are examples of homotopy equivalent but not homeomorphic manifolds, {\it e.g.} certain lens spaces. But on the other hand, the homotopy equivalent manifolds $1$-dominate each other by definition. 

It is then natural to ask that what will happen if we identify two manifolds in the same equivalence class of degree one mapping order when they are merely homotopy equivalent. Let us consider aspherical closed oriented $n$-manifolds with Hopfian fundamental groups. By the same argument as Rong's for $3$-manifolds, if such $M$ and $N$ $1$-dominate each other, then they are homotopy equivalent. If Borel conjecture holds, they are homeomorphic to each other. Recall the Borel conjecture: Let $M$ and $N$ be closed and aspherical topological manifolds, if they are homotopy equivalent, then they are homeomorphic to each other. 

In addition, Gromov norm is also an invariant for the equivalence classes for the  degree one mapping order. This is because $||M||\ge \deg(f)\cdot ||N||$ if there is a map $f:M\rightarrow N$. It is also amusing to note that if we use the original mapping order, then each equivalence class is 
either the same as the one given by degree one order, or each manifold in the equivalence class has Gromov norm $0$. 
 Especially, by Gromov's proof of Mostow rigidity, the equivalence class of degree one mapping order containing a hyperbolic manifold is exactly this manifold. This is because, by above discussion, any two manifolds in an equivalence class would have the same Gromov norm. Moreover, any degree $1$ map between hyperbolic manifolds with same volume is a homotopy equivalence and thus an isometry. 

 One advantage to consider this degree one mapping order is 
that we may prevent the issue of reducibility as indicated in LeBrun's example. 
Another advantage of this mapping order is that 
 the set of topological entropies would be an invariant for each equivalence class.
The {\it Shub topological entropy} $S(f)$ of a map $f: N\rightarrow N$ is defined as $\log\lambda(f)$, where $\lambda(f)$ is the maximal spectral radius
$f_*: H_l(N, \mathbb R)\rightarrow H_l(N, \mathbb R)$ among all $l$. 

\begin{prop}
Assume $M$ and $N$ are equivalent
through degree one map, i.e. there are $g:M\rightarrow N$ and $h:N \rightarrow M$ both of degree one. Then, for any 
map $f_1:M \rightarrow M$, the composed map $f_2=g\circ f_1\circ h: N\rightarrow N$ would have $S(f_2)=S(f_1)$. 
\end{prop}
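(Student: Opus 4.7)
The plan is to reduce the claim to a linear-algebra computation on real homology, based on the fact that both $g_*$ and $h_*$ are isomorphisms in every degree. First I would check surjectivity of $g_*$ on $H_l(-;\mathbb R)$: the projection formula gives
\[
g_*\bigl(g^*\alpha\cap[M]\bigr)\;=\;\alpha\cap g_*[M]\;=\;\deg(g)\,\alpha\cap[N],
\]
so when $\deg(g)=1$ the map $\mathrm{PD}_M\circ g^*\circ\mathrm{PD}_N^{-1}$ is a right inverse to $g_*$. Applying the same argument to $h$ and comparing dimensions forces $\dim H_l(M;\mathbb R)=\dim H_l(N;\mathbb R)$, so both $g_*$ and $h_*$ are linear isomorphisms on each $H_l(-;\mathbb R)$.

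Next, fix $l$ and set $P=g_*$, $Q=h_*$, $A=(f_1)_*$, so that $(f_2)_*=PAQ$ and $(PAQ)^k=P\,(AQP)^{k-1}\,AQ$. Taking operator norms, extracting $k$-th roots, and noting that the constants $\|P\|,\|A\|,\|Q\|$ contribute factors tending to $1$, I obtain
\[
\rho\bigl((f_2)_*\bigr)\;=\;\lim_{k\to\infty}\|(PAQ)^k\|^{1/k}\;=\;\rho(AQP).
\]
Since $P,Q$ are isomorphisms, the reverse inequality is obtained by conjugating with $P^{-1}$ and $Q^{-1}$, which have bounded norms. Cyclic invariance of the spectral radius, $\rho(XY)=\rho(YX)$, then rewrites $\rho(AQP)$ as the spectral radius of $(f_1)_*\circ(hg)_*$ acting on $H_l(M;\mathbb R)$, transferring the problem from $N$ back to $M$.

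The final step, and the main obstacle, is to identify $\rho\bigl((f_1)_*\circ(hg)_*\bigr)$ with $\rho\bigl((f_1)_*\bigr)$ on each $H_l(M;\mathbb R)$; once that is in place, taking the maximum over $l$ converts the pointwise equality into $\lambda(f_2)=\lambda(f_1)$ and hence $S(f_2)=S(f_1)$. This is the delicate point, because $(hg)_*$ is an isomorphism of $H_*(M;\mathbb R)$ that is forced to be the identity only on the top class $H_n$, and on intermediate degrees it can a priori act nontrivially. To handle this I would bring in the equivalence structure more forcefully: apply the symmetric construction starting from $f_2$ to build a self-map $h\circ f_2\circ g$ of $M$, run the identical spectral-radius calculation on it to bound $\rho((f_1)_*)$ in terms of $\rho((f_2)_*)$, and combine the two estimates via the bounded inverses of $g_*,h_*$ so that the intervening factors of $(hg)_*$ and $(gh)_*$ cancel asymptotically. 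The resulting two-sided bound collapses to the desired equality of Shub entropies.
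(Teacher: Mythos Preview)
You have correctly isolated the crux: cyclic invariance of the spectral radius takes you from $\rho((f_2)_*)$ to $\rho\bigl((f_1)_*(hg)_*\bigr)$, but the final identification $\rho\bigl((f_1)_*(hg)_*\bigr)=\rho\bigl((f_1)_*\bigr)$ does not follow from $(hg)_*$ being merely an isomorphism. Your proposed two-sided fix does not close this gap. Running the symmetric construction gives $f_3=hf_2g=(hg)f_1(hg)$, whence the same cyclic argument yields $\rho\bigl((f_3)_*\bigr)=\rho\bigl((f_1)_*(hg)_*^{2}\bigr)$; each iteration piles on another factor of $(hg)_*$ rather than cancelling one, and there is no submultiplicativity for spectral radii that would let the bounded norms of $g_*^{\pm1},h_*^{\pm1}$ convert $\rho\bigl((f_1)_*(hg)_*\bigr)$ into a bound on $\rho\bigl((f_1)_*\bigr)$.

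In fact the statement as written is false, so no argument can succeed. Take $M=N=T^2$, let $h=\mathrm{id}$ and let $g$ be the linear diffeomorphism with matrix $\left(\begin{smallmatrix}2&1\\1&1\end{smallmatrix}\right)\in SL(2,\mathbb Z)$; both have degree one. With $f_1=\mathrm{id}_M$ one gets $f_2=gf_1h=g$, so $S(f_1)=0$ while $S(f_2)=\log\frac{3+\sqrt5}{2}>0$. The paper's own proof has exactly the lacuna you flagged: it shows (via Nakayama's lemma over $\mathbb Z$, in place of your Poincar\'e-duality argument over $\mathbb R$) that $g_*$ and $h_*$ are isomorphisms on each $H_l$, and then simply asserts $S(f_2)=S(f_1)$. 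That conclusion would follow if $h_*=g_*^{-1}$, making $(f_2)_*$ conjugate to $(f_1)_*$, but nothing in the hypotheses forces this. Your instinct that something further is needed was right; the missing ingredient is not a sharper estimate but a stronger hypothesis.
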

\begin{proof}
This follows from Nakayama's lemma. One corollary of it says the following (see Theorem 2.4 of \cite{Mat}).

\begin{lemma}\label{nakayama}
Suppose $R$ is a commutative ring. If $Z$ is a finitely generated $R$-module and $f: Z\rightarrow Z$ is a surjective endomorphism, then $f$ is an isomorphism.
\end{lemma}

In our situation, homology groups are finitely generated $\mathbb Z$-modules. Since degree one map $g:M\rightarrow N$ (resp. $h:N \rightarrow M$) would induce epimorphisms $g_*: H_k(M)\rightarrow H_k(N)$ (resp.  $h_*: H_k(N)\rightarrow H_k(M)$). See for example Lemma 1.2 in \cite{Rong}. Hence the compositions $g_*\circ h_*$, $h_*\circ g_*$ are epimorphisms, and thus isomorphisms by Lemma \ref{nakayama}. This implies $H_k(N) \cong H_k(M)$ and $g_*$, $h_*$ are isomorphisms. Hence the Shub topological entropy $S(f_2)=S(f_1)$.
\end{proof}

Notice that entropy invariant (and its variants) could make complementary use with Gromov norm. The
latter detects hyperbolic pieces and the entropy sees the others because if one admits a self degree $>1$ map then Gromov norm
has to be $0$.

Gromov norm has other interesting applications in the $4$-manifolds theory. For example, we know that  $\mathbb CP^2\# k
\overline{\mathbb CP^2}$ and $S^2 \times S^2$ all have Gromov norm $0$ (even for each homology class of them). This implies that
the exotic differential structures of them will not admit any metric with negative sectional curvature. 
  Another example is that any manifold with amenable
fundamental group ({\it e.g.} trivial, nilpotent, solvable, abelian...) will not be greater than the ones with negative sectional curvature under the
mapping order. 
Moreover, we have the following, which is surely known to the experts.
\bigskip 

{\it  There does  not exist two homotopic closed Riemannian manifolds such that one has negative sectional 
curvature  and the other has
\begin{enumerate}
\item non-negative Ricci curvature; or
\item almost non-negative sectional curvature.
\end{enumerate}
}

\bigskip

\begin{proof}
By Cheeger-Gromoll \cite{CG}, if a manifold admits a metric with non-negative Ricci curvature, then its
fundamental group is virtual abelian, {\it i.e.} there is an abelian subgroup of it with finite index.
On the other hand, a group is amenable if it has a finite index amenable subgroup and an abelian group is amenable, so the 
fundamental group of a manifold with non-negative section curvature has to be amenable. 
Similarly, if a manifold admits almost non-negative sectional curvature ({\it i.e.} it admits a sequence of Riemannian metrics $\{g_n\}_{n\in \mathbb N}$ whose sectional curvatures and diameters satisfy $\sec(M, g_n)\ge -\frac{1}{n}$ and $\text{diam}(M, g_n)\le \frac{1}{n}$), then the  fundamental groups of such manifolds are virtually nilpotent \cite{FY}, which is also amenable.
This implies the Gromov norm of such
a manifold is zero. Therefore, it cannot admit any metric with negative sectional curvature because Gromov norm of such a 
manifold would have nonzero Gromov norm \cite{IY}.
\end{proof}

Notice that negative sectional curvature cannot be replaced by negative Ricci curvature because we know that any manifold of dimension greater than two could admit a metric with negative Ricci curvature \cite{Lohk}.

For non-closed manifolds, the situation is different: there are homotopy types of manifolds, {\it e.g.} $\mathbb R^n$, which admit complete metrics of non-negative and negative sectional curvatures respectively. In general, Cheeger-Gromoll soul takes care of the complete non-negative metrics. On the other hand, the classical Hadarmard-Cartan theorem says that if $M^n$ is a connected complete Riemannian manifold with non-positive
sectional curvature, then its universal covering space is diffeomorphic to $\mathbb R^n$. It is probably true that  if
a closed manifold admits both non-positive and non-negative sectional curvature, then it has to be flat. All closed flat $n$-manifolds are finitely covered by
$T^n$.

\end{document}